\documentclass[a4paper, 11pt, leqno]{article}
\usepackage{amssymb, amsmath, a4wide, enumerate, theorem, latexsym, amsfonts,  pifont, stmaryrd}
\usepackage[english]{babel}
\usepackage{multirow}
\usepackage{apalike, soul}
\usepackage{bbding}
\usepackage{pifont}
\usepackage{setspace, comment, lingmacros}
\usepackage[margin=2.5cm,nohead]{geometry}
\usepackage[all]{xy}
\usepackage{bm}
\usepackage{verbatim}
\usepackage[normalem]{ulem}
\usepackage{xcolor}

\usepackage{fancyhdr}
\usepackage{appendix}
\usepackage{epigraph}
\usepackage{boxedminipage}
\usepackage{comment}
\usepackage{graphicx}

\newcommand{\Her}{\textrm{Her}}

\theorembodyfont{\rmfamily}
\newtheorem{Thm}{Theorem}
\newtheorem{Def}{Definition}
\newtheorem{Lem}{Lemma}

\newtheorem{Prop}{Proposition}

\newcommand{\marg}[1]{\marginpar{\scriptsize \singlespace \fbox{\parbox{18mm}{#1}}}}

\newcommand{\stb}{\textsc{Stb}}

\newcommand{\set}{\textsc{Set}}

\newcommand{\ed}{\textrm{ED}}

\def\L{\mathcal{L}}

\begin{document}

\author{\O ystein Linnebo }
\title{Strict potentialism in modal mirrors}
\date{Version of 6 March 2026}

\hyphenation{quanti-fi-ca-tion}

\maketitle

\begin{abstract}
    \noindent Potentialism is the view that objects are successively generated in an incompletable process. A strict version of the view adds that truths are successively determined. Strict potentialism can be analyzed using two modalities: one for the generation of objects, another for truths becoming determined. The result is a classical bimodal logic. We obtain simpler and more user-friendly theories by invoking so-called mirroring theorems to ``switch off'' one or both modalities, in return for a less classical logic. When the modality of object generation is switched off, we obtain a restricted plural logic. When the modality of truth determination is switched off, the logic becomes intuitionistic. Finally, the value of this general approach to strict potentialism is illustrated by applications to a Weyl-inspired predicative set theory, Cantor's domain principle, and strict potentialism about Cantorian sets. 
\end{abstract}

\begin{spacing}{1.5}

\section{Introduction} 


Cantor famously defines the notion of set as follows:
\begin{quote}
By a `set' we understand every collection into a whole $M$ of determinate, well-distinguished objects $m$ of our intuition or our thought (which will be called the `elements' of $M$). We write this as: $M = \{m\}$. 
(Cantor 1895, 481) 
\end{quote}
The idea is that a set is obtained from many objects $m$. Using the notation of contemporary plural logic, we may designate these many objects as `$mm$'. By collecting them ``into a whole'', we obtain a single set $M = \{mm\}$. 

What about the set-theoretic paradoxes? 
Cantor was forced to comment on this matter only two years later when the paradoxes were discussed among German mathematicians. In an 1897 letter to Hilbert, he wrote: 
\begin{quote}
    I say of a set that it can be thought of as finished [\ldots] 
    if it is possible without contradiction (as can be done with finite sets) to think of all its elements as existing together, and so to think of the set itself as \emph{a compounded thing for itself}; or (in other words) if it is \emph{possible} to imagine the set as actually existing with the totality of its elements. \cite[p.~927]{Ewald:1996}
\end{quote}
The letter continues: 
\begin{quote}
    And so too in the first article of [Cantor 1895---quoted above], I define `set' (meaning thereby only the finite or transfinite) at the very beginning as an `assembling together' [Zusammenfassung]. But an `assembling together' is only possible if an `\textit{existing together}' [\textit{Zusammensein}] is \textit{possible}. (928)
\end{quote}
There is a simple and attractive idea here. Any multiplicity of objects that ``exist together'' can, according to Cantor ``be gathered together into `\textit{one} thing'\,'', namely, their set. 

With this example in place, let me fix some terminology. 
When a single set $M$ is fully characterized in terms of its many elements $mm$, we have a \textit{generative definition} of $M$, which is thus \textit{generated} from $mm$. 
Since these generative definitions can be iterated, we may talk about a \textit{process} of generation. Thus, this process need not take place in time but can be understood abstractly. A generative process can be \textit{completable} or not. Cantor famously regards the generation of natural numbers as completable. When completed, all the natural numbers ``exist together'', such that their set too can be generated. 
Next, let \textit{potentialism} be the view that certain kinds of mathematical object are successively generated in an incompletable process. An austere version is Aristotle's view that no infinite collection---not even the natural numbers---is completable. A far more relaxed version is Cantor's view that, although \textit{each} set is completable, the collection of \textit{all} sets is not: for all sets cannot exist together.  

Potentialism is often explicated modally. 
Let us use plural variables to represent multiplicities of objects that exist together (what Cantor elsewhere calls ``consistent multiplicities''). Then it is natural to take Cantor to be committed to the following view: 
\begin{equation}\label{eq:pot-set-E}
    \Box \forall xx \Diamond \exists y \textsc{Set}(xx,y)
\end{equation}
where `$\textsc{Set}(xx,y)$' states that $y$ is the set whose elements are $xx$. 
As Cantor is well aware, though, it is impossible to conceive of the multiplicity of all sets as ``one finished thing''. This naturally corresponds to the claim that it is impossible for the domain to be closed under the transition from some coexisting objects to the corresponding set:
\begin{equation} \label{eq:no-tot-sets}
    \neg \Diamond \forall xx \exists y \textsc{Set}(xx,y)
\end{equation}

Two definitions remain. Let \textit{actualism} be the view that finds no place for potentiality or incompletability in mathematics. All the sets do ``exist together'', just like the books in my office. 
It is just that the sets are too many or otherwise unsuitable for the operation of set formation to be applied to them. 
Finally, \textit{strict potentialism} is a more thoroughgoing form of potentialism. Strict potentialists add to the core potentialist claim about the successive generation of \textit{objects} an analogous claim concerning \textit{truths}, namely, that truths too are successively determined or ``made true'' in an incompletable process.\footnote{This concept of strict potentialism was introduced in \cite[pp.~83--4]{Linnebo:2017-PhilMaths} and further analyzed in \cite{Linnebo&Shapiro:ActPotInf}; see also  \cite{Crosilla-Linnebo:2024-Weyl}, \cite{Linnebo&Shapiro:2023-PredFormPot}, and \cite{Linnebo&Shapiro:2024-PredClassesStrictPot}.} 

How are we to understand this notion of truths being determined or made true? Here are some motivating examples. Suppose the set $M = \{mm\}$ is generated at some stage. At this stage it is determined, for each member of $mm$, that it is an element of~$M$. It is also determined that no other object, whether already available or yet to be generated, is an element of $M$: $\forall x (x \in M \to x \prec mm)$. Plausibly, the law of extensionality for sets is determined right at the beginning of the process of set generation; for our notion of set ensures that any set ever generated must be subject to the law of extensionality. 
The notion of determination relies on an idea of \textit{intrinsic} truthmaking (or, for that matter, grounding). Each truth owes its truth to facts that are available at some stage of the generative process. The truth holds in virtue of these available facts, with no reliance on any facts located higher in the hierarchy. 


The purpose of this article is to explore and compare some different logical explications of strict potentialism and thus to gain a better understanding of the view and its logical and mathematical consequences. 

Since we need to represent both the generation of objects and the determination of truths, it is natural to attempt a bimodal explication. First, there is a modality of generation (``G-modality''). Given some objects $mm$, for example, it is possible to generate their set $M = \{mm\}$. We represent this by means of a G-accessible world that contains the set $M$. Second, there is a modality of determination (``D-modality''). When $\varphi$ is made true (or grounded) at a stage $w$, we say that $\varphi$ is \textit{determined at $w$} and represent this as $\varphi$ being true at all worlds that are D-accessible from $w$. 

This may seem overwhelming. Even a single modality in the analysis of mathematics is unconventional. The use of \textit{two} modalities will strike some people as simply too much. I sympathize. I will therefore study ways to eschew modal operators. A central achievement will be to show that each modality can be absorbed into the interpretation of the connectives and quantifiers, thus allowing us to drop the corresponding modal operator from the syntax. I will refer to this maneuver of taking a modal operator out of the syntax, with a commensurate adjustment of the interpretation of the logical operators, as ``switching off'' the modality. The reverse operation ``switches on'' the modality. 

This ability to switch a modality on and off, and the logical consequences of doing so, will be examined. Our analysis will reveal a trade-off between two unconventional steps. Strict potentialists need to accept \textit{either} the use of modality in mathematics \textit{or} a departure from the classical or traditional logic. With generative (or G-) modality  switched on, we can use traditional plural logic. When it is switched off, we must use a more restricted form of plural logic that licenses fewer pluralities. When the modality of determination (D-modality) is switched on, we can use classical propositional logic. When it is switched off, we must use intuitionistic logic. 
The following table summarizes these results: 

\vspace{2mm}
\begin{center}
\begin{tabular}{ccc}
  \hline
  \textit{modality on or off} && \textit{logical effect} \\ \hline \hline
  G-modality on & &traditional plural logic \\
  G-modality off & &restricted plural logic \\ 
  D-modality on & &classical logic \\
  D-modality off && intuitionistic logic \\
\end{tabular}
\end{center}

\vspace{2mm}
It is useful to be able to switch the modalities on and off. For the purposes of philosophical analysis, it is often helpful to switch them on, to obtain a maximally explicit description of the relevant phenomenon. This also makes available classical first-order logic and traditional plural logic. But for the purposes of doing strict potentialist mathematics or comparing with familiar views in the foundations of mathematics, it is simpler and more user-friendly to switch the modalities off.\footnote{This work completes a trio of articles on how the modalities of potentialism can be switched on or off: \cite{Linnebo:PotDemodal} uses a critical plural logic to ``demodalize'' liberal potentialism, while \cite{Linnebo:2027-WhatIsPot} discusses the philosophical significance of this demodalization.} 

Here is the plan. First, I provide a bimodal explication of strict potentialism (\S\ref{sec:bimodal}). The ``distance'' between the two modalities provides a measure of the non-classicality to which a form of strict potentialism is committed (\S\ref{sec:amount-indet}). Next, I prove theorems that enable us to switch the two modalities on and off---first, for first-order logic (\S\S\ref{sec:FOL-mirroring}--\ref{sec:bi-mirroring-FOL}); then, for plural logic (\S\S\ref{sec:mono-mirroring-PL}--\ref{sec:bi-mirroring-PL}). I observe, in passing, how these results give rise to a potentialist version of Kripke semantics for intuitionistic (first-order or plural) logic (\S\ref{sec:PotKripkeSem}). 
Finally, to illustrate the value of this general approach to strict potentialism, I apply it to a Weyl-inspired predicative set theory (\S\ref{sec:ED}), Cantor's domain principle (\S\ref{sec:DP}), and strict potentialism about Cantorian sets  (\S\ref{sec:strict-pot-ST}). 


\section{A bimodal explication of strict potentialism}\label{sec:bimodal}

I will now clarify the intended interpretation of each pair of modal operators and use this to motivate a bimodal logic that is appropriate for analyzing strict potentialism. 

\subsection{D-necessity} 

Let $\Box_D$ be the modality of being determined as true at a stage. The intuitive idea is that $w \models \Box_D \varphi$  means that $\varphi$ is \textit{made true by}---or, for that matter, \textit{grounded in}---facts about the objects available at $w$ along with generic facts that constrain objects yet to be generated (such as the law of extensionality for sets). As mentioned, it is important that D-necessity at a stage be understood as intrinsic to that stage. Thus, $w \models \Box_D\varphi$ means that $\varphi$ holds in virtue of objects and facts available at $w$, with no need for any appeal to stages beyond $w$.  Beyond that, D-necessity may, for present purposes, be treated as a black box.\footnote{See \cite{Linnebo:GenExpl} and \cite{Linnebo&Litland:GenGrounded} for some ideas about how to open this black box.} 

As we pass from one stage to another, more objects and facts become available. This justifies the 4 axiom for $\Box_D$: whatever is determined as true by material intrinsic to $w$ is also determined as true by more such material. Thus, D-necessary is monotonic:
\begin{equation} \label{eq:D-growth}
  w \leq w' \Rightarrow w \models \Box_D \varphi \Rightarrow  w' \models \Box_D \varphi  
\end{equation}
Since the T axiom too is justified, we adopt the modal logic S4 for D-modality. 

Various familiar additions to S4 are not appropriate, however. Since more truths may become D-necessary ``later'', $\neg \Box_D \varphi \to \Box_D \neg \Box_D \varphi$ often fails. This means that the 5 axiom cannot be assumed. In fact, even the weaker ``Brouwerian'' axiom B (that is, $\varphi \to \Box \Diamond \varphi$) often fails.\footnote{Free choice sequences provide simple examples of both failures. First, although it is not determined at one stage that the second entry of a sequence is 0, this may become determined at a later stage. Second, it may be open at one stage that the second entry of our free choice sequence is 0, but this possibility may later be shut down by letting this entry be 1.} Yet another axiom that cannot be assume is
\begin{equation*}
    \Diamond \Box \varphi \to  \Box \Diamond \varphi, \tag{.2}    
\end{equation*}
which is appropriate for convergent frames.\footnote{This axiom is known both as .2 and G. We will use the former name to avoid conflation with G-modality. A frame is \textit{convergent} iff any two extensions of a single world have a shared extension: $w_0 \leq w_1 \wedge w_0 \leq w_2 \to \exists w_3 (w_1 \leq w_3 \wedge w_2 \leq w_3)$. The examples from the previous note demonstrate the failure of .2 for D-modality, bearing in mind that once chosen, an entry of a choice sequence is fixed (i.e., necessary).}

\subsection{G-possibility} 

Let $\Diamond_G$ represent generative possibility. The generation in question is \textit{deterministic}, in the sense that the form of generation and the input to which it is applied  determine the output. For example, set generation applied to the objects $mm$ necessarily yields $\{mm\}$.\footnote{This contrasts with free choice sequences, mentioned in the previous notes, where new entries are freely chosen.} I will generally elide the qualification `deterministic'. 

The intuitive idea is that $w \models \Diamond_G \varphi$ means that $\varphi$ can be brought about by principles of generation and material to which these can be applied, all available at $w$. 
For example, if a number $n$ and the successor-of operation are available, the successor of $n$ can be generated. Or, if $mm$ and the set-of operation are available, the set $\{mm\}$ can be generated. 

As is customary, we take the modality of (deterministic) generation to be (at least) S4.2.\footnote{See, e.g., \cite{Linnebo:2013-PHS} and \cite{Linnebo&Shapiro:ActPotInf}.} Axiom .2 is justified because the license to generate an object is never revoked. After all, generation never ``uses up'' a generative operation or the material to which the operation is applied. Thus, if you choose not to generate some objects right away, you can always generate them later---and thus achieve convergence. 

By contrast, the axioms 5 and B are invalid for G-modality as well as for D-modality. At some stage it is the case (and thus also G-possible) that there are precisely two objects. But this G-possibility will be shut down as soon as we generate more than two objects, which cannot then be ``destroyed''. 

Some terminology is needed. 
\begin{Def}[Stability]\label{def:stb}
We say that a formula $\varphi(\mathbf{u})$ is \emph{stable} iff 
\begin{gather*}
    \Diamond \varphi(\mathbf{u}) \to \Box \varphi(\mathbf{u}) \tag{\stb-$\varphi$}
\end{gather*}
Classically, the stability of a formula is equivalent to the conjunction of what we will call its \textit{positive} and \textit{negative stability}:
\begin{gather*}
    \varphi(\mathbf{u}) \to \Box \varphi(\mathbf{u}) \tag{\stb$^+$-$\varphi$}
    \\
    \neg \varphi(\mathbf{u}) \to \Box   \neg  \varphi(\mathbf{u}) \tag{\stb$^-$-$\varphi$}
\end{gather*}
\end{Def}

Since the generation represented by G-modality does not change any ``intrinsic'' properties of objects already generated, we finally add that every atomic predication is stable with respect to G-modality.

\subsection{How the two modalities interact} 

It remains to explain how the two modalities interact. 
An obvious interaction principle is the subsumption of G-necessity under D-necessity: 
\begin{equation}\label{eq:subsump}
    \Box_D \varphi \to \Box_G \varphi \tag{\textsc{Subsump}}    
\end{equation}
Whatever is determined to hold at a stage must hold throughout all of the generative possibilities available from that stage. For example, since the law of extensionality is determined for sets, we cannot generate non-extensional sets. 

A subtler interaction principle states that certain generative possibilities cannot be lost. If at a stage the set of $mm$ can be generated, for example, it is determined that this set can be generated. But we have to be careful. We just observed that G-possibilities can be lost: although it was once possible that there are only two objects, this possibility was lost as soon as we generated a third object. 
The correct statement is that the G-possibility of a \textit{determinate} truth is itself determinate: 
\begin{equation*} \label{eq:mixed-G}\tag{\textsc{Mixed}.2}
    \Diamond_G \Box_D \varphi \to \Box_D\Diamond_G \varphi
\end{equation*} 
This captures the idea that the generative possibilities available at a stage are intrinsic to this stage and thus available at any extended stage as well.

By contrast, new generative possibilities can become available at D-later worlds. Some examples 
are provided in Section~\ref{sec:ED}. This means that we cannot assume $\neg \Diamond_G \varphi \to \Box_D \neg \Diamond_G \varphi$.


Summing up, I have articulated a bimodal logic BM, which explicates the key strict potentialist ideas of successive generation of objects and determination of truths. The resulting framework for strict potentialism can be used to develop specific views, such as strict potentialism about the natural numbers, predicative sets, or even Cantorian sets. 

\section{The amount of indeterminacy } \label{sec:amount-indet}

Let us reflect on how much determinacy there is on some specific form of strict potentialism. For which statements $\varphi$ do we have $\Box_D \varphi$? 

\subsection{Nested spheres of possible determinate truths }

Clearly, more and more statements become determinate as we generate more objects. For example, the statement that there exist at least three objects becomes determinate only after a few generative steps. Instead of asking which statements \textit{are} determinate, it is therefore more interesting to ask which ones \textit{become} determinate after some further generation. Accordingly, let us write $\textsc{Poss}_G(w)$ for the set of $\varphi$ such that $\Diamond_G \Box_D \varphi$ is true at $w$. Then (\ref{eq:mixed-G}) entails: 
\begin{equation}
    w \leq_D w' \Rightarrow \textsc{Poss}_G(w) \subseteq \textsc{Poss}_G(w')
\end{equation}
Thus, as we pass to later worlds, we cannot lose, but might gain, G-possible determinacy. 

Next, consider the property of still being a contender for determinacy as far as D-modality is concerned. Let us write $\textsc{Poss}_D(w)$ for the set of statements $\varphi$ such that $\Diamond_D \Box_D \varphi$ is true at $w$. 
The monotonicity of D-necessity, (\ref{eq:D-growth}), entails that the sets of contenders for determinacy cannot grow, but might shrink, as we pass to later worlds: 
\begin{equation}
    w \leq_D w' \Rightarrow \textsc{Poss}_D(w) \supseteq \textsc{Poss}_D(w')
\end{equation}

Why this asymmetry between G-possible determinacies, which might only grow, and D-possible determinacies, which might only shrink? 
The reason is that G-modality ``rules in'' possibilities, in a monotonic manner, since generative abilities are never lost. By contrast, D-modality ``rules out'' possibilities, thus shrinking the space of possibilities that are left open, also in a monotonic manner. 

Finally, (\ref{eq:subsump}) yields: 
\begin{equation}
     \textsc{Poss}_G(w) \subseteq \textsc{Poss}_D(w)
\end{equation}
Putting everything together, we obtain: 
\begin{equation}
    w \leq_D w' \leq_D \ldots 
 \ \Rightarrow \ \textsc{Poss}_G(w) \subseteq \textsc{Poss}_G(w') \subseteq \ldots \subseteq \textsc{Poss}_D(w') \subseteq \textsc{Poss}_D(w)
\end{equation}
In words: as we pass to larger and larger worlds, the collection of determinate truths is approximated from below (by G-modality) and from above (by D-modality).

\subsection{The question of Reverse Subsumption} 

The two approximations may or may not converge on a single sphere of determinate truths. This question of convergence is of fundamental importance.  
Suppose that Reverse Subsumption, defined as 
\begin{equation}
    \Box_G \varphi \to \Box_D \varphi \tag{\textsc{R-Subsump}}    
\end{equation}
holds at some world. At that world, the two approximations converge on a single collection of determinate truths. 
Every statement that holds throughout all the generative possibilities available at that world would also be determined there. 

Reverse Subsumption would ensure the determinacy of all ``natural'' mathematical statements, in a precise sense defined below.\footnote{This ``natural'' class consists of the statements obtained by applying the composite translation defined in Section~\ref{sec:FO-essentials}.} Since the bimodal logic is classical, every such statement either holds throughout all the available generative possibilities or not. Either way, that fact would, thanks to Reverse Subsumption, be determined.\footnote{This follows from Lemmas~\ref{Lem:G-stab} and \ref{Lem:RS}, established below.} 
Thus, every ``natural'' statement $\varphi$ would be determined one way or the other: $\Box_D \varphi \vee \Box_D\neg \varphi$. 
We will find that, when D-modality is switched off, this determinacy would license classical quantification across the relevant hierarchy of G-possible objects. 

Alternatively, suppose that Reverse Subsumption fails. Then D-modality would be strictly broader than G-modality. The two approximations of determinate truth would then fail to converge and instead leave a middle band of indeterminacy that extends into the class of ``natural'' mathematical statements. We will find that, when D-modality is switched off, this scenario licenses only intuitionistic logic for quantification across the relevant hierarchy of G-possible objects.

What view might license Reverse Subsumption? We would need it to be determined at some world $w$ that no further generative possibilities will ever arise. That would ensure that whatever holds across all the generative possibilities at $w$ is also determined at $w$---precisely as Reverse Subsumption states. My view is that Reverse Subsumption is invalid. Some plausible counterexamples are described in the three last sections.  

By contrast, certain \textit{relativized} forms of Reverse Subsumption, obtained by restricting our attention to objects that are generated in some particular way, can be justified. Consider the natural numbers, which are generated by the successor-of operation. Plausibly, it is determined that there is no other way to generated natural numbers. This thought leads predicativists (and anyone with stronger views) to accept Reverse Subsumption for quantification relativized to natural numbers: 
\begin{equation*}
    \Box_G \forall n \varphi \to \Box_D \forall n\varphi
\end{equation*}
And this, in turn, will be shown (in Section~\ref{sec:ED}) to justify classical logic for the relativized quantifiers `$\forall n$' and `$\exists n$' when D-modality is switched off. 

In short, the greater the distance between G- and D-modality on some form of strict potentialism, the more indeterminacy this view will entail, and---as will be shown---the greater the need for intuitionistic logic when D-modality is switched off. Thus, the distance between the two modalities provides a measure of the non-classicality to which a form of strict potentialism is committed.

\section{First-order unimodal mirroring: a review }\label{sec:FOL-mirroring}

I will now describe some existing results---and develop some new ones---that allows us to turn the two modalities on and off. We gradually work our way up to the cases that most interest us. We begin, in this section, by reviewing some known results about first-order unimodal systems. These results are extended in the next section to a first-order version of our bimodal system. In later sections, we press on to the desired systems involving a plural logic. 

What we may call \textit{G\"odel mirroring} allows D-modality to be switched on or off. This corresponds to a choice between the classical modal logic S4 and intuitionistic non-modal logic, respectively. What I call \textit{potentialist mirroring} allows G-modality to be switched on or off. Although doing so makes no difference concerning the validity of arguments in (non-modal) first-order logic, we will later find that a sharp difference emerges for plural logic.

\subsection{G\"odel mirroring}\label{sec:Godelmirror}

The G\"odel translation $g$ is a translation from the language of intuitionistic logic into that of classical modal logic. The translation is given by the following clauses:
\begin{align*}
     \varphi  \mapsto \Box \varphi \qquad \textrm{for $\varphi$ atomic}  &&
    \\
    \varphi \vee \psi  \mapsto \varphi^g \vee \psi^g && 
    \varphi \wedge \psi  \mapsto \varphi^g \wedge \psi^g \\
    \varphi \to \psi  \mapsto \Box (\varphi^g \to \psi^g) &&  \neg \varphi  \mapsto \Box \neg \varphi^g \\
    \exists x\, \varphi  \mapsto \exists x \, \varphi^g&& 
    \forall x\, \varphi   \mapsto \Box \forall x \, \varphi^g
\end{align*}
The translation can be extended to plural and second-order logic in an analogous way.

Let $\vdash_\textrm{S4}$ be deducibility in the logic that results from combining S4 with classical first-order logic. 
As is well known, this logic  proves the Converse Barcan Formula,
\begin{equation}
\exists x \Diamond \varphi \to \Diamond \exists x \varphi. \tag{CBF}
\end{equation}

It is easy to prove by induction on syntactic complexity that $\vdash_\textrm{S4} \varphi^g \to \Box \varphi^g$. 
Thus, in S4 we get a partial ``modal collapse'' for formulas in the image of $g$, since $\varphi^g$ and $\Box \varphi^g$ are provably equivalent. However, it is also easy to see that the modal collapse is not complete, since we do not have $\Diamond \varphi^g \to \varphi^g$ (or equivalently $\neg \varphi^g \to \Box \neg \varphi^g$). 
Using the notions of stability from Definition~\ref{def:stb}, our findings can be summarized as follows.
\begin{Lem}\label{Lem:G-stab} 
Any formula in the image of the G\"odel translation $g$ is positively stable but need not be negatively stable. 
\end{Lem}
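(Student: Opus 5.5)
The claim has two halves, and I will treat them separately. Positive stability means that $\vdash_\textrm{S4} \varphi^g \to \Box \varphi^g$ for every formula $\varphi$ of the intuitionistic language. Failure of negative stability means exhibiting some $\varphi$ for which $\neg\varphi^g \to \Box\neg\varphi^g$ is not S4-valid.

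For positive stability I argue by induction on the complexity of $\varphi$.

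\textbf{Box-headed clauses.} In the atomic, conditional, negation and universal cases, $\varphi^g$ has the form $\Box\psi$. The 4 axiom $\Box\psi \to \Box\Box\psi$ then gives the result immediately, with no use of the induction hypothesis.

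\textbf{Conjunction.} By the induction hypothesis, $\varphi^g \wedge \psi^g$ implies $\Box\varphi^g \wedge \Box\psi^g$. This in turn implies $\Box(\varphi^g\wedge\psi^g)$, since $\Box$ distributes over $\wedge$ in any normal modal logic.

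\textbf{Disjunction.} From $\Box\varphi^g \vee \Box\psi^g$ we get $\Box(\varphi^g\vee\psi^g)$ by monotonicity of $\Box$ applied to each disjunct.

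\textbf{Existential.} This is the only step needing some care, and I expect it to be the main point. Suppose $\exists x\,\varphi^g$. By the induction hypothesis, $\exists x\,\Box\varphi^g$. I now need $\exists x\,\Box\varphi^g \to \Box\exists x\,\varphi^g$. This is the dual form of the Converse Barcan Formula: apply CBF to $\neg\varphi^g$ and contrapose. Alternatively, derive it from $\varphi^g \to \exists x\,\varphi^g$ by necessitation and the K axiom, giving $\forall x(\Box\varphi^g \to \Box\exists x\,\varphi^g)$. Either route uses only that $\vdash_\textrm{S4}$ includes classical quantifier rules in the modal context, which the text grants via CBF.

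For the failure of negative stability, a single countermodel suffices. I take a two-world S4 frame $w_0 \le w_1$ with constant domain and an atomic $P$ true only at $w_1$. Then $(P)^g = \Box P$ is false at $w_0$, so $\neg P^g$ holds at $w_0$. But $P^g$ is true at $w_1$, so $\Box\neg P^g$ fails at $w_0$. Hence $\neg P^g \to \Box \neg P^g$ is not valid, and equivalently $\Diamond P^g \to P^g$ fails, matching the remark preceding the lemma.

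To respect the intended reading, I would note that $P$ can be chosen to be persistent, i.e.\ monotone along $\le$. This shows the failure is not an artifact of non-monotone atoms.
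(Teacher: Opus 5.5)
Your argument is correct and follows the paper's own route: induction on syntactic complexity for $\vdash_{\textrm{S4}} \varphi^g \to \Box\varphi^g$, plus a countermodel of the kind the paper leaves as ``easy to see'' for the failure of $\neg\varphi^g \to \Box\neg\varphi^g$. One slip in the existential case: substituting $\neg\varphi^g$ into CBF and contraposing yields only the box form $\Box\forall x\,\varphi^g \to \forall x\,\Box\varphi^g$, not the needed $\exists x\,\Box\varphi^g \to \Box\exists x\,\varphi^g$, so rely on your second derivation instead (necessitate $\varphi^g \to \exists x\,\varphi^g$, apply K, then eliminate $\exists x$, which is legitimate since $x$ is not free in $\Box\exists x\,\varphi^g$).
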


It is well known that the G\"odel translation validates intuitionistic logic when the logic of the modal language is S4. More precisely, we have the following theorem.\footnote{See \cite{RasiowaSikorski:1953} for a proof.} 
\begin{Thm} \label{thm:mirroring-G}
Let $\vdash_\textrm{int}$ be intuitionistic deducibility in the given language, but if the language is plural or higher-order, we remove any comprehension axioms. Let $\vdash_\textrm{S4}$ be the corresponding deducibility relation in S4 and classical logic. Then we have:
\[
\varphi_1, \ldots, \varphi_n \vdash_\textrm{int} \psi \textrm{\quad iff \quad} \varphi^g_1, \ldots, \varphi^g_n \vdash_\textrm{S4} \psi^g.
\]
\end{Thm}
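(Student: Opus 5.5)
The proof splits into the two directions, with very different difficulty.

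\emph{Left to right (soundness).} I will induct on intuitionistic derivations, using a natural deduction or Hilbert-style system without comprehension. The key tool is Lemma~\ref{Lem:G-stab}: every $\varphi^g$ is positively stable, so $\vdash_\textrm{S4} \varphi^g \to \Box\varphi^g$. This gives necessitation under assumptions. If $\Gamma^g \vdash_\textrm{S4} \chi$, then $\Box\Gamma^g \vdash_\textrm{S4} \Box\chi$ by the K and 4 reasoning, and $\Gamma^g$ proves $\Box\Gamma^g$. Hence $\Gamma^g \vdash_\textrm{S4} \Box\chi$.

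The rules are then handled as follows.
\begin{itemize}
\item Conjunction and disjunction rules are immediate, because $g$ commutes with $\wedge$ and $\vee$.
\item $\to$-introduction uses the deduction theorem followed by the boxing step above.
\item $\to$-elimination uses axiom T.
\item $\neg$ is treated as $\to\bot$, with $\bot^g$ taken to be $\bot$.
\item $\forall$-introduction uses classical generalization plus boxing. $\forall$-elimination uses T and instantiation. Here I must check that substitution commutes with $g$, which is a routine induction.
\item $\exists$-rules are direct.
\item Ex falso is trivial.
\end{itemize}
For the plural or higher-order extension, the plural quantifier clauses mirror the first-order ones. Since comprehension is dropped, no further axioms need checking.

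\emph{Right to left (faithfulness).} This is the main obstacle, and I would argue semantically. Suppose $\Gamma \nvdash_\textrm{int} \psi$. By completeness of intuitionistic logic for Kripke models with increasing domains, there is a Kripke model and a node $w$ that forces all of $\Gamma$ but not $\psi$.

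I then read this model as a classical S4 model. The worlds are the nodes and accessibility is the (reflexive, transitive) order. Each world's domain is its Kripke domain, which is monotone, so CBF is respected. Atomic facts are as in the Kripke model.

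The central step is a lemma proved by induction on $\varphi$: a node forces $\varphi$ in the Kripke sense iff $\varphi^g$ is classically true there. The atomic clause holds because Kripke forcing of atoms is persistent, so truth of an atom at a node matches truth of its box. The $\to$, $\neg$ and $\forall$ clauses match the boxed translations exactly. The clauses for $\wedge$, $\vee$ and $\exists$ are direct.

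This gives a countermodel showing $\Gamma^g \nvdash_\textrm{S4} \psi^g$, provided S4 with classical first-order logic is sound for increasing-domain frames. That soundness holds because this is precisely the logic containing CBF and lacking the Barcan Formula.

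\emph{Plural extension.} The delicate part is extending both completeness and the forcing lemma to plural logic without comprehension. My plan is to treat plural variables as a second sort, interpreted in the Kripke models by an arbitrary monotone family of pluralities per node, with membership $x \prec yy$ stable. Henkin-style completeness for this two-sorted intuitionistic logic then gives the countermodel as before. Alternatively, I would cite the Rasiowa–Sikorski algebraic proof with topological Boolean algebras, which handles the general case uniformly.
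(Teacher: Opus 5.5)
Your first-order argument is correct, but it is not the route the paper takes. The paper gives no proof of its own and simply cites Rasiowa and Sikorski, whose argument is algebraic. They evaluate formulas in topological Boolean (interior) algebras over a single fixed domain. They note that the value of $\varphi^g$ there coincides with the value of $\varphi$ in the Heyting algebra of open elements, and they obtain both directions from algebraic completeness theorems for the two logics. In that setting the topology (infinite intersections of opens need not be open) is what blocks the Barcan formula and the constant-domain principle.

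You instead prove the embedding syntactically, with the positive stability of Lemma~\ref{Lem:G-stab} supplying necessitation under assumptions, and you prove faithfulness with Kripke models. In Kripke models constant domains would validate $\forall x(\varphi\vee\chi)\to\forall x\varphi\vee\chi$ (with $x$ not free in $\chi$), so you rightly use increasing domains on both sides. That choice requires Kripke completeness of intuitionistic logic for increasing-domain models and soundness of quantified S4 on nested domains, which are the frames on which the provable Converse Barcan Formula is valid. Your route is more elementary and matches the paper's own move in Section~\ref{sec:PotKripkeSem}, where ordinary Kripke models are read as modal models. The algebraic route buys a fixed domain and a uniform treatment of both directions.

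One step in your plural extension needs correcting. With comprehension removed, plural (or second-order) logic is just two-sorted first-order logic with $\prec$ an ordinary binary predicate. Your first-order argument therefore goes through sort by sort, and no special completeness theorem is needed. But suppose that by ``stable'' you mean stable in the sense of Definition~\ref{def:stb}, or that you take pluralities to be rigid sets of objects. Then your models validate (\ref{eq:Dec-prec}), and if each plurality is a subset of the current domain, also (\ref{eq:P-Omni}); bare intuitionistic logic proves neither.

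Completeness then fails for your class of models. Take $\psi$ to be $x\prec yy\vee\neg x\prec yy$: its translation $\Box(x\prec yy)\vee\Box\neg\Box(x\prec yy)$ is unprovable in bare S4, so the theorem holds for this $\psi$, but your models cannot witness it. The fix is to interpret the second sort by arbitrary monotone domains and $\prec$ by an arbitrary persistent relation; persistence is automatic in Kripke models. The paper deliberately treats the rigidity principles separately, via Lemmas~\ref{Lem:godel-prec} and \ref{Lem:godel-omni}.

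The same caution applies to identity, if it is in the language. Interpret it as a persistent congruence rather than true identity, since true identity validates decidable equality in Kripke models.
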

We call theorems of this sort \emph{mirroring theorems}. The left-to-right direction states that we have an \textit{embedding} of intuitionistic logic in classical S4, while the reverse direction says that the embedding is \textit{faithful}.

\subsection{First-order potentialist mirroring}\label{sec:potmirror}

In mathematics we usually quantify over all objects regardless of the stage at which they are generated. When we say `for all', we mean \emph{for all that will ever be generated}, and when we say `there is', we mean that \emph{we can generate an object such that}. Such unrestricted quantifiers correspond to \emph{modalized} quantifiers of the modal framework: $\Box \forall x$ and $\Diamond \exists x$.

\begin{Def}
The \emph{potentialist translation}  $\varphi \mapsto \varphi^\Diamond$ replaces each ordinary quantifier with the corresponding modalized quantifier.
  A formula is \emph{fully modalized} iff it is in the image of this translation. 
\end{Def}

\begin{Thm}[Classical potentialist mirroring]\label{thm:class-pot-FOL-mirror}
Let $\vdash^\Diamond$ be provability by $\vdash$, S4.2, and axioms stating that every atomic predicate is stable, but with no higher-order comprehension. Then we have: 
\[
\varphi_1, \ldots, \varphi_n \vdash \psi \textrm{\quad iff \quad} \varphi^\Diamond_1, \ldots, \varphi^\Diamond_n \vdash^\Diamond \psi^\Diamond.
\]
\end{Thm}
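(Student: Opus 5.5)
The plan is to prove the two directions separately, since they call for different techniques.

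\emph{Left to right (soundness of the embedding).} I will work with a natural deduction system for classical first-order logic and prove, by induction on derivations, that whenever $\Gamma \vdash \psi$, then $\Gamma^\Diamond \vdash^\Diamond \psi^\Diamond$. There are two preliminary lemmas.

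\begin{enumerate}[(a)]
\item \textbf{Stability of fully modalized formulas.} Every fully modalized formula is stable, i.e.\ $\vdash^\Diamond \Diamond\varphi^\Diamond \to \Box\varphi^\Diamond$. I will prove this by induction on $\varphi$.
\begin{itemize}
\item Atomic formulas are stable by axiom.
\item Negation: stability of $\varphi$ in the form $\Diamond\varphi\to\Box\varphi$ gives, by duality, $\Diamond\neg\varphi\to\Box\neg\varphi$.
\item Conjunction and disjunction: stability passes through both connectives. For conjunction, $\Diamond(\varphi\wedge\psi)\to\Diamond\varphi\wedge\Diamond\psi\to\Box\varphi\wedge\Box\psi$.
\item Quantifiers: $\Box\forall x\varphi^\Diamond$ and $\Diamond\exists x\varphi^\Diamond$ are stable by S4.2. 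For instance, $\Diamond\Box\forall x\varphi\to\Box\Diamond\forall x\varphi$ by axiom .2, and $\Box\Diamond\forall x\varphi\to\Box\Box\forall x\varphi$ using the induction hypothesis under $\forall$. The existential case follows from 4 and .2 in the same way.
\end{itemize}
This is where .2 is genuinely needed.

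\item \textbf{The quantifier rules.} The modalized quantifiers obey the ordinary rules relative to objects that exist at the current world. The crucial point is that a free variable in $\Gamma^\Diamond$ denotes an existing object, so that $\Box\forall x\varphi^\Diamond \to \varphi^\Diamond(t)$ holds by T. Existential introduction uses $\varphi^\Diamond(t)\to\exists x\varphi^\Diamond\to\Diamond\exists x\varphi^\Diamond$.
\end{enumerate}

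For $\forall$-introduction and $\exists$-elimination, I will use the stability lemma to necessitate the (stable) premises. From $\Gamma^\Diamond \vdash^\Diamond \varphi^\Diamond(y)$ with $y$ fresh, we get $\Box\bigwedge\Gamma^\Diamond \vdash \Box\forall y\varphi^\Diamond$. For $\exists$-elimination, we pass from $\Diamond\exists x\varphi^\Diamond$ inside the $\Diamond$ and then exit using the stability of the conclusion $\chi^\Diamond$. The propositional rules, including classical reductio, are immediate because the translation commutes with the connectives and the modal logic is classical.

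\emph{Right to left (faithfulness).} I will argue contrapositively. Suppose $\Gamma\nvdash\psi$. By completeness there is a classical model $M$ with $M\models\Gamma$ and $M\not\models\psi$. I will turn it into the trivial one-world Kripke model whose single world has domain $M$, with the accessibility relation reflexive. This frame validates S4.2 and makes every atomic predicate stable, and the modalities collapse there. An induction on formulas then shows $\varphi^\Diamond$ is true at that world iff $M\models\varphi$. Hence the world is a countermodel to $\Gamma^\Diamond\vdash^\Diamond\psi^\Diamond$, and soundness of $\vdash^\Diamond$ for such models finishes this direction.

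\emph{Main obstacle.} I expect the main difficulty to be the quantifier steps of the left-to-right induction, specifically $\exists$-elimination. There one must use CBF (derivable in S4 with classical first-order logic, as noted above) together with the stability of $\chi^\Diamond$ to discharge $\Diamond$. One must also check that free variables behave correctly across accessible worlds, which requires growing domains so that existing objects persist.
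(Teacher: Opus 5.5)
Your proposal is correct and follows essentially the paper's route: the left-to-right direction goes by induction on proofs, with the Stability Lemma (using .2 and 4 in the quantifier cases) doing the work for $\forall$-introduction and $\exists$-elimination. The only difference is in faithfulness: the paper simply drops the modal operators from a $\vdash^\Diamond$-derivation, so that every S4.2 and stability axiom becomes trivial and $\varphi^\Diamond$ erases to $\varphi$. Your one-world reflexive countermodel is the semantic counterpart of that step, and the syntactic version has the advantage of not appealing to completeness or Kripke soundness.
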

See \cite[Thm.~5.4]{Linnebo:2013-PHS} for a proof, which proceeds by an induction on proofs.

An essential stepping stone towards this theorem is the following lemma.
\begin{Lem}[Stability]\label{Lem:ModalCollapse}
Let $\varphi$ be a fully modalized formula of a modal language $\mathcal{L}^\Diamond$. Then S4.2 and the stability axioms for $\L^\Diamond$ prove that $\varphi$ is stable. Hence we also have that $\Diamond \varphi$, $\varphi$, and $\Box \varphi$ are equivalent.
\end{Lem}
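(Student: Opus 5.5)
I will prove by induction on the construction of the fully modalized formula $\varphi$ that S4.2 together with the stability axioms proves $\textsc{Stb}$-$\varphi$, i.e.\ $\Diamond\varphi \to \Box\varphi$. The final claim then follows at once. The T axiom gives $\Box\varphi \to \varphi$ and $\varphi \to \Diamond\varphi$, and stability closes the circle, so $\Diamond\varphi$, $\varphi$ and $\Box\varphi$ are equivalent. Throughout I will also use the equivalent pair of positive and negative stability, $\varphi \to \Box\varphi$ and $\neg\varphi\to\Box\neg\varphi$, whichever is more convenient in a given case. The base case is atomic $\varphi$, where stability is an axiom.

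For the connectives I proceed as follows. For negation, suppose $\Diamond\neg\varphi$. If $\varphi$ held, then positive stability of $\varphi$ (which follows from stability and T) would give $\Box\varphi$, contradicting $\Diamond\neg\varphi$. Hence $\neg\varphi$ holds, and negative stability of $\varphi$ gives $\Box\neg\varphi$. For conjunction, $\Diamond(\varphi\wedge\psi)$ yields $\Diamond\varphi$ and $\Diamond\psi$, hence $\Box\varphi\wedge\Box\psi$ by the induction hypothesis, hence $\Box(\varphi\wedge\psi)$. The remaining connectives are classically definable from these two, or can be handled the same way: for disjunction, $\Diamond(\varphi\vee\psi)$ gives $\Diamond\varphi\vee\Diamond\psi$, and each disjunct yields the corresponding $\Box$.

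The quantifier cases are where the modal axioms genuinely enter. By the definition of the translation, the existential quantifier appears as $\Diamond\exists x\,\varphi$ with $\varphi$ stable.
\begin{itemize}
\item Positive stability follows from axiom 4, since $\Diamond\Diamond \to \Diamond$.
\item For the full direction $\Diamond\Diamond\exists x\varphi \to \Box\Diamond\exists x\varphi$, first collapse the antecedent to $\Diamond\exists x\varphi$ using 4. Then argue inside the possibility: from $\exists x\varphi$, stability of $\varphi$ gives $\exists x\Box\varphi$. By CBF (available in S4 with classical quantification, as noted above) in its dual form, this yields $\Box\exists x\varphi$, or more directly $\Box\Diamond\exists x\varphi$. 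So far we have $\Diamond\Box\exists x\varphi$.
\item Axiom .2 now turns $\Diamond\Box\exists x\varphi$ into $\Box\Diamond\exists x\varphi$, as required.
\end{itemize}
The universal case $\Box\forall x\,\varphi$ is handled either dually, via $\neg\Diamond\exists x\neg\varphi$ and the negation case, or directly: from $\Diamond\Box\forall x\varphi$, axiom .2 gives $\Box\Diamond\forall x\varphi$, and I then need $\Diamond\forall x\varphi\to\forall x\varphi$ at each world.

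That last step is the main obstacle. A variable $x$ assigned an object at the later world need not exist at the intermediate world. I would try to handle it by passing to the dual form: $\Diamond\Box\forall x\varphi$ is equivalent to $\neg\Box\Diamond\exists x\neg\varphi$, and the existential case above, applied to the stable formula $\neg\varphi$, gives $\Diamond\Diamond\exists x\neg\varphi\to\Box\Diamond\exists x\neg\varphi$. So stability of $\Diamond\exists x\neg\varphi$ yields stability of its negation, which is provably equivalent to $\Box\forall x\varphi$. This makes the careful use of CBF, together with the free-variable (parameter) stability supplied by the induction hypothesis, the delicate point. The rest is routine bookkeeping.
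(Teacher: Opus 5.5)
Your argument is correct and follows the standard proof behind this lemma (the paper gives no proof and defers to Linnebo 2013): induction on complexity, with 4 and .2 doing the work at $\Diamond\exists x$, and the $\Box\forall x$ case obtained by duality via $\neg\Diamond\exists x\neg\varphi$. A few labels are off, harmlessly: the principle you actually use is $\exists x\Box\varphi\to\Box\exists x\varphi$ (obtained by necessitating $\varphi\to\exists x\varphi$), not CBF or its dual; axiom 4 gives only the \emph{negative} half $\Diamond\Diamond\exists x\varphi\to\Diamond\exists x\varphi$, while the positive half is exactly what needs .2; and the ``obstacle'' $\Diamond\forall x\varphi\to\forall x\varphi$ is not one, since $\Diamond\forall x\varphi\to\forall x\Diamond\varphi$ is derivable in the same way and the induction hypothesis then finishes it.
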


What happens to Theorem~\ref{thm:class-pot-FOL-mirror} when we let the logic of the modal language be intuitionistic rather than classical?\footnote{For a description of intuitionistic S4.2, see \cite{Linnebo&Shapiro:ActPotInf}, which in turn draws on \cite{SimpsonAlex-int-modal-logic}.} The question is answered by a theorem due to \cite{Linnebo&Shapiro:ActPotInf}. We here state the result incorporating an improvement from \cite{Crosilla-Linnebo:2024-Weyl}.

\begin{Thm}[Intuitionistic potentialist mirroring]
\label{thm:int-pot-FOL-mirror}
Let $\vdash_\textrm{int}$ be intuitionistic deducibility in the given language, but as before, if the language is plural or higher-order, we remove any comprehension axioms. Let $\vdash^\Diamond_\textrm{int}$ be the corresponding deducibility relation in intuitionistic S4.2, plus the stability axioms in the form $\Diamond \varphi \to \Box \varphi$. 
Then, for any non-modal formulas $\varphi_1, \ldots, \varphi_n, \psi$, we have: 
\[
\varphi_1, \ldots, \varphi_n \vdash_\textrm{int} \psi \textrm{\quad iff \quad} \varphi^\Diamond_1, \ldots, \varphi^\Diamond_n \vdash^\Diamond_\textrm{int} \psi^\Diamond.
\]
\end{Thm}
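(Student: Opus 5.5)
We prove the two directions separately, following the pattern of Theorem~\ref{thm:class-pot-FOL-mirror}, but now working inside intuitionistic S4.2.

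\textbf{Embedding (left to right).} We argue by induction on the length of intuitionistic derivations, say in a natural deduction system. First we prove an intuitionistic analogue of Lemma~\ref{Lem:ModalCollapse}. For every non-modal $\varphi$, intuitionistic S4.2 plus the stability axioms $\Diamond\alpha\to\Box\alpha$ for atomic $\alpha$ proves $\Diamond\varphi^\Diamond\to\Box\varphi^\Diamond$. From T we then get that $\Diamond\varphi^\Diamond$, $\varphi^\Diamond$ and $\Box\varphi^\Diamond$ are equivalent.

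This stability lemma is proved by induction on $\varphi$. The atomic case is the stability axiom. For $\Box\forall x\,\psi^\Diamond$, stability follows from 4 together with .2 in the form $\Diamond\Box\chi\to\Box\Diamond\chi$, applied under the quantifier. For $\Diamond\exists x\,\psi^\Diamond$ we use $\Diamond\Diamond\to\Diamond$ and .2. The connective cases are the delicate ones. For $\wedge$ we need $\Diamond(A\wedge B)\to\Diamond A\wedge\Diamond B$, which is available. For $\vee$ we need $\Diamond(A\vee B)\to\Diamond A\vee\Diamond B$, which is an axiom of the Simpson-style intuitionistic modal logic. For $\to$ we need to push $\Diamond$ through an implication using the interaction axiom $(\Diamond A\to\Box B)\to\Box(A\to B)$ together with stability of $A$ and $B$.

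Once the lemma is in hand, each natural deduction rule is checked in turn. The propositional rules translate to themselves. For $\forall$-introduction we use $\Box$-necessitation on the eigenvariable. This is legitimate because the open assumptions are translated formulas, so they are stable and hence boxed, and $\Box$-necessitation applies to boxed premises. For $\forall$-elimination we use T. The $\exists$ rules are handled by collapsing $\Diamond\exists x\,\psi^\Diamond$ with the help of stability of the conclusion. Free variables require care: the translated formulas should be read under the assumption that the parameters exist. This is handled as in \cite{Linnebo:2013-PHS}, using the fact that existence is necessary because domains grow.

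\textbf{Faithfulness (right to left).} This is the main obstacle, because we cannot reuse the classical argument: classical S4.2 is not sound for Kripke models of intuitionistic logic. We argue model-theoretically. Suppose $\varphi_1,\ldots,\varphi_n\nvdash_\textrm{int}\psi$. By Kripke completeness there is an intuitionistic Kripke model $K$, with a poset $\leq$ and growing domains, in which the $\varphi_i$ hold at some node $k$ and $\psi$ fails there.

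We build a model $M$ of intuitionistic S4.2 as follows. The intuitionistic accessibility relation is $\leq$. The modal accessibility relation is the total relation on the worlds extending a common root, or alternatively on the cone of $k$ with the order collapsed. The choice must make the modal relation convergent (this is .2) and must make atomic facts stable. The cleanest choice is to let all worlds share one modal relation, namely ``any $k'\geq k$'', and to read $\Box$ through Simpson's birelational semantics. Monotonicity of atomic facts in $K$ then gives the stability axioms.

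We then verify by induction that $M,k\models\chi^\Diamond$ iff $K,k\models\chi$. The key clauses are $\Box\forall$ and $\Diamond\exists$. These work because modal reach within the cone coincides with the intuitionistic quantification over later nodes in the Kripke clause for $\forall$, and because the Kripke clause for $\exists$ is local (though it is monotone). If this collapse fails, the fallback is a syntactic route: compose with the G\"odel translation of Theorem~\ref{thm:mirroring-G}, and use a proof-theoretic interpretation of intuitionistic S4.2 into classical bimodal S4/S4.2 to reduce faithfulness to the classical potentialist mirroring of Theorem~\ref{thm:class-pot-FOL-mirror}.
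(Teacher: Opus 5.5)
Your embedding direction is the standard argument and is fine in outline: a stability lemma for fully modalized formulas, then induction on derivations. The $\to$ case goes through as you suggest: from $\Diamond(A\to B)$ and $\Box A$ you get $\Diamond B$, hence $\Box B$, and then you apply $(\Diamond A\to\Box B)\to\Box(A\to B)$.

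The gap is in faithfulness: the model $M$ you build does not work, for two independent reasons. First, every modal relation you float ($\leq$ on the cone of $k$, ``any $k'\geq k$'', or the total relation on the cone) reaches strictly later nodes of $K$. Persistence of atomic facts only gives $\alpha\to\Box\alpha$, not the stability axiom $\Diamond\alpha\to\Box\alpha$: if $\alpha$ holds at some $k'>k$ but not at $k$, then $\Diamond\alpha$ holds at $k$ while $\Box\alpha$ fails there. Second, even when stability holds, the claimed correspondence breaks at the existential clause. Take the modal relation to be $\leq$, with nodes $k<k'$, $D(k)=\{a\}$, $D(k')=\{a,b\}$, and $P$ true of $b$ only. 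All stability axioms hold, yet $M,k\models\Diamond\exists x\,Px$ while $K,k\not\models\exists x\,Px$: the modalized existential sees witnesses that the local Kripke clause does not. In addition, a total relation on the cone is compatible with domains growing along the modal relation only if domains are constant, and intuitionistic logic is incomplete for constant-domain models. So the step you hedge with ``if this collapse fails'' does fail.

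The diagnosis behind the detour is also mistaken. The classical faithfulness argument is not a Kripke-model argument, and it transfers verbatim: erase every $\Box$ and $\Diamond$. Each axiom of intuitionistic S4.2 becomes an intuitionistic theorem; this covers K, $\neg\Diamond\bot$, $\Diamond(A\vee B)\to\Diamond A\vee\Diamond B$, $(\Diamond A\to\Box B)\to\Box(A\to B)$, T, 4, .2, and any quantifier--modality principles such as CBF. Each stability axiom erases to $\alpha\to\alpha$, Necessitation becomes trivial, and $\varphi^\Diamond$ erases to $\varphi$. So a derivation of $\psi^\Diamond$ from the $\varphi_i^\Diamond$ becomes an intuitionistic derivation of $\psi$ from the $\varphi_i$. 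The paper does not reprove this theorem (it cites Linnebo--Shapiro and Crosilla--Linnebo), but this is the method it points to for the analogous faithfulness claims, namely ``dropping the G-modality'' in Lemma~\ref{lem:pot-BM-FOL}. The semantic counterpart is to take the modal relation to be the identity, as the paper does for completeness of potentialist Kripke semantics. Then $\Box\chi$ and $\Diamond\chi$ both collapse to $\chi$ by persistence, the frame conditions and stability axioms are trivial, and your truth-correspondence is immediate. Your G\"odel fallback could be completed, using Lemma~\ref{lem:composite-trans}, the soundness half of Lemma~\ref{lem:unfaithful}, the lower row of Lemma~\ref{lem:pot-BM-FOL}, and Theorem~\ref{thm:mirroring-G}. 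As written, though, it is only a gesture, and the faithfulness step it relies on is itself proved by erasure.
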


\section{First-order bimodal mirroring}\label{sec:bi-mirroring-FOL}

We define the first-order bimodal logic BM-FOL  as the result of adding the bimodal logic described in Section~\ref{sec:bimodal} to classical first-order logic.

I first present the essentials, then the full story. The latter is intended for \textit{aficionados} only; others may skim or even skip, without compromising their ability to understand the remainder of the article. The same goes for the analogous discussion Section~\ref{sec:bi-mirroring-PL}.

\subsection{The essentials}\label{sec:FO-essentials}

Each of the two modalities can be switched on or off. Switched on, a modality is left as an explicit part of the syntax of a modal logic. Switched off, a modality is absorbed into 
the logical operators, in accordance with the two translations just described.
\footnote{With two modalities switched on, each translation need a simple extension whose precise definition can be found in Sect.~\ref{sec:FO-fullstory}.} Depending on whether D-modality is switched on or off, the associated logic will be either classical or intuitionistic.

The four resulting languages and the translations between them are depicted by the following diagram 
\vspace{2mm}
\[ 
\xymatrix{
 \L \ar@{->}[r]^\Diamond \ar@{->}[d]_g& \L^G\ar@{->}[d]^g \\ 
\L^D \ar@{->}[r]^\Diamond & \L^\textrm{BM} 
}
\]
\vspace{0mm}

\noindent where $\L$ is the language of intuitionistic first-order logic, I-FOL, and the other three languages add one or both modalities. 
By switching both modalities off, we obtain two composite translations from $\L$ to $\L^\textrm{BM}$. 

Some straighforward work in BM establishes: 
\begin{Lem}\label{lem:composite-trans}
The two composite translations commute, \textit{modulo} consequence in BM-FOL, being both equivalent to the simplified translation $\varphi \mapsto \varphi^\ast$ defined as follows: 
\begin{align*}
     \varphi  \mapsto \Box_D \varphi \qquad \textrm{for $\varphi$ atomic}  &&
    \\
    \varphi \vee \psi  \mapsto \varphi^\ast \vee \psi^\ast && 
    \varphi \wedge \psi  \mapsto \varphi^\ast \wedge \psi^\ast \\
    \varphi \to \psi  \mapsto \Box_D (\varphi^\ast \to \psi^\ast) &&  \neg \varphi  \mapsto \Box_D \neg \varphi^\ast \\
    \exists x\, \varphi  \mapsto \Diamond_G \exists x \, \varphi^\ast&& 
    \forall x\, \varphi   \mapsto \Box_D \forall x \, \varphi^\ast 
\end{align*}
Note that $\ast$ is just like the ordinary G\"odel translation $g$, with the single exception that the existential quantifier is translated in a potentialist-friendly manner as $\Diamond_G \exists$. 
\end{Lem}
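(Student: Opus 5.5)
The plan is to make both composite translations explicit, then show by a single induction on $\varphi$ that each is provably equivalent in BM-FOL to $\varphi^\ast$. Following the conventions of Sect.~\ref{sec:FO-fullstory}, I take the G\"odel translation $g$ to use $\Box_D$. On the modal side I extend it by $\Box_G\psi \mapsto \Box_D\Box_G\psi^g$ and $\Diamond_G\psi\mapsto\Diamond_G\psi^g$. I take the potentialist translation to use $\Box_G\forall$ and $\Diamond_G\exists$.

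The upper route ($\Diamond$ first, then $g$) then sends $\forall x\,\varphi$ to $\Box_D\Box_G\Box_D\forall x\,\varphi'$ and $\exists x\,\varphi$ to $\Diamond_G\exists x\,\varphi'$. The lower route ($g$ first, then $\Diamond$) sends $\forall x\,\varphi$ to $\Box_D\Box_G\forall x\,\varphi'$ and $\exists x\,\varphi$ to $\Diamond_G\exists x\,\varphi'$. On atomic formulas, $\wedge$, $\vee$, $\to$ and $\neg$, both routes agree with $\ast$ clause by clause. Equivalences established at lower complexity can be substituted inside $\Box_D$ and $\Box_G$ by necessitation and K for each modality. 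So the induction reduces to the two quantifier clauses.

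The first key step is a lemma: every $\varphi^\ast$ is positively D-stable, i.e. $\vdash_{\textrm{BM}}\varphi^\ast\to\Box_D\varphi^\ast$. This is the analogue of Lemma~\ref{Lem:G-stab}, proved by induction.
\begin{itemize}
\item Atomic, $\to$, $\neg$ and $\forall$: the formula begins with $\Box_D$, so axiom 4 does the work.
\item $\wedge$ and $\vee$: immediate from the induction hypothesis.
\item $\exists$: this is the new case. From $\Diamond_G\exists x\,\psi^\ast$ and the induction hypothesis I get $\Diamond_G\exists x\,\Box_D\psi^\ast$. Then the Converse Barcan Formula for $\Box_D$ (the dual of CBF, namely $\exists x\Box_D\chi\to\Box_D\exists x\chi$, available in classical quantified S4) gives $\Diamond_G\Box_D\exists x\,\psi^\ast$. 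Finally (\ref{eq:mixed-G}) yields $\Box_D\Diamond_G\exists x\,\psi^\ast$.
\end{itemize}
I expect this $\exists$ case to be the main obstacle. It is exactly where the interaction axiom (\ref{eq:mixed-G}) is needed, and I must check that CBF for $\Box_D$ is legitimately available in BM-FOL.

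For the universal clauses, write $\chi$ for $\forall x\,\psi^\ast$. I show $\Box_D\chi\leftrightarrow\Box_D\Box_G\chi\leftrightarrow\Box_D\Box_G\Box_D\chi$.
\begin{itemize}
\item Right to left: the T axiom for $\Box_G$, and then for $\Box_D$ under the outer $\Box_D$ (using K), strips the inner operators.
\item Left to right: axiom 4 gives $\Box_D\chi\to\Box_D\Box_D\chi$. Applying (\ref{eq:subsump}) under $\Box_D$ gives $\Box_D\Box_G\Box_D\chi$, and T for $\Box_D$ then gives $\Box_D\Box_G\chi$.
\end{itemize}
The existential clauses coincide syntactically on both routes once the induction hypothesis is applied under $\Diamond_G\exists x$. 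So both composites are equivalent to $\ast$, and hence to each other.
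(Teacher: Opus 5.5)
Your argument is correct and matches the ``straightforward work in BM'' the paper alludes to without spelling out: both composites agree with $\ast$ clause by clause except at $\forall$, where 4, T and (\ref{eq:subsump}) give $\Box_D\chi\leftrightarrow\Box_D\Box_G\chi$. Two small corrections. First, your positive D-stability lemma, and with it CBF and (\ref{eq:mixed-G}), is never used under your stated conventions; it is what the paper needs for Lemma~\ref{lem:unfaithful}, not here. Second, the paper's extended G\"odel translation sends $\Box\varphi$ to $\Box_D\varphi^g$ rather than $\Box_D\Box_G\varphi^g$, so the upper route's $\forall$ clause becomes $\Box_D\Box_D\forall x$; this is BM-equivalent to your choice by the same Subsumption argument.
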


\begin{Thm}[First-order bimodal mirroring]\label{thm:BM-FOL}
\begin{enumerate}[(a)]
    \item The translation $\ast$ yields a faithful embedding of I-FOL in BM-FOL.
    \item If we add Reverse Subsumption to BM-FOL, we obtain a faithful embedding of classical FOL. 
\end{enumerate}
\end{Thm}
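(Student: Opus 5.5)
Both parts follow one pattern: soundness by induction on derivations, faithfulness by turning countermodels into BM-models. For soundness in (a), I work in a natural deduction or Hilbert system for I-FOL. I show that $\varphi_1,\ldots,\varphi_n\vdash_{\textrm{int}}\psi$ implies $\varphi_1^\ast,\ldots,\varphi_n^\ast\vdash_{\textrm{BM-FOL}}\psi^\ast$ by induction on the derivation.

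The first step is an analogue of Lemma~\ref{Lem:G-stab}: every $\varphi^\ast$ is positively $D$-stable, i.e.\ $\varphi^\ast\to\Box_D\varphi^\ast$. The only new case is $\Diamond_G\exists x\,\varphi^\ast$. By the induction hypothesis this is equivalent to $\Diamond_G\exists x\Box_D\varphi^\ast$. By CBF in $\Box_D$ it gives $\Diamond_G\Box_D\exists x\,\varphi^\ast$. Then (\ref{eq:mixed-G}) yields $\Box_D\Diamond_G\exists x\,\varphi^\ast$, which is what we need.

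I also need persistence of $\Diamond_G\exists$-formulas along $G$: $\Diamond_G\Diamond_G\exists x\,\varphi^\ast\to\Diamond_G\exists x\,\varphi^\ast$ is just the 4 axiom. Moreover, $\varphi^\ast\to\Box_G\varphi^\ast$ follows from D-stability and (\ref{eq:subsump}).

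With these facts the rules go through as follows.
\begin{itemize}
\item $\to$-intro, $\forall$-intro and $\neg$-intro use necessitation for $\Box_D$ together with positive D-stability of the context.
\item $\exists$-intro uses $\varphi^\ast(t)\to\exists x\varphi^\ast\to\Diamond_G\exists x\varphi^\ast$.
\item $\exists$-elim is the delicate case. From $\Diamond_G\exists x\,\varphi^\ast$ and $\varphi^\ast(y)\vdash\chi^\ast$, I reason inside $\Diamond_G$; the context survives there by the $\Box_G$-stability just noted. This yields $\Diamond_G\chi^\ast$. I then need $\Diamond_G\chi^\ast\to\chi^\ast$.
\end{itemize}
That last collapse is the main obstacle. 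I would handle it by a separate induction showing that every $\chi^\ast$ satisfies $\Diamond_G\chi^\ast\to\chi^\ast$. The atomic case comes from G-stability of atomic predications plus the interaction axioms. The $\Box_D$-clauses need an argument that $\Diamond_G\Box_D\theta\to\Box_D\Diamond_G\theta$ can be pushed inside, again via (\ref{eq:mixed-G}). The $\Diamond_G\exists$ clause uses transitivity, and $\vee,\wedge$ use .2 for $G$. If the $\Box_D\forall$ clause resists, I would fall back on a semantic soundness argument over BM-frames.

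For faithfulness in (a), I go semantic. Given an intuitionistic Kripke countermodel (poset $P$, increasing domains), I build a BM-model with $\leq_D$ equal to the order of $P$ and $\leq_G$ equal to the identity. The identity relation satisfies S4.2 trivially, and all interaction axioms hold. In this model $\Diamond_G\exists$ collapses to $\exists$, so $\varphi^\ast$ holds at $w$ iff $\varphi$ is forced at $w$. The countermodel therefore transfers.

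For (b), soundness needs the additional derivability of excluded middle, $(\varphi\vee\neg\varphi)^\ast$. With Reverse Subsumption, $\Box_G$ and $\Box_D$ become interchangeable. Using Lemma~\ref{Lem:ModalCollapse} for the G-component, every $\varphi^\ast$ becomes equivalent to a fully G-modalized formula, which is G-stable. G-stability then gives $\varphi^\ast\vee\Box_G\neg\varphi^\ast$, and Reverse Subsumption upgrades this to $\varphi^\ast\vee\Box_D\neg\varphi^\ast$, which is exactly $(\varphi\vee\neg\varphi)^\ast$.

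Faithfulness in (b) takes a classical countermodel $M$ and uses the one-world frame with both relations reflexive. Reverse Subsumption holds there trivially, and $\varphi^\ast$ is true iff $\varphi$ is true in $M$.
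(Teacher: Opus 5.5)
Your proposal is correct, but it reaches the theorem by a different route from the paper. The paper does no induction on derivations for $\ast$. It identifies $\ast$, modulo BM (Lemma~\ref{lem:composite-trans}), with the G\"odel translation followed by the bimodal potentialist translation. Soundness and faithfulness of (a) are then inherited from Theorem~\ref{thm:mirroring-G} and Lemma~\ref{lem:pot-BM-FOL}; the order of composition matters, since the other composite is unfaithful by Lemma~\ref{lem:unfaithful}. Part (b) follows from Lemma~\ref{Lem:RS} and Theorem~\ref{thm:class-pot-FOL-mirror}.

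You instead prove soundness directly. Your key new ingredient is that every $\chi^\ast$ collapses along G, i.e.\ $\Diamond_G\chi^\ast\to\chi^\ast$, obtained from \textsc{Mixed}.2 and the G-stability of atomic predications. You obtain faithfulness semantically: an intuitionistic Kripke countermodel is read as a BM-model with $\leq_G$ the identity, and for (b) a classical model is read as a one-world BM-model. This is the same device the paper uses in Section~\ref{sec:PotKripkeSem} for completeness of potentialist Kripke semantics.

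Your route is self-contained. It does not depend on Lemma~\ref{lem:composite-trans} or on the faithfulness half of Lemma~\ref{lem:pot-BM-FOL}, and it incidentally shows that the .2 axiom for $\Diamond_G$ plays no role in (a). The paper's modular route makes the ``switch off one modality at a time'' structure explicit, and that structure is what carries over to the plural embedding (Theorem~\ref{thm:fullstory-plural}). Your semantic faithfulness argument would not transfer there, since I-BPL is not complete for the standard semantics.

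Some details of your sketch can be tightened:
\begin{enumerate}[(i)]
\item The $\vee$ and $\wedge$ clauses of the collapse lemma need no .2, only distribution of $\Diamond_G$.
\item The $\to$ and $\neg$ clauses use $\alpha^\ast\to\Box_G\alpha^\ast$.
\item The $\Box_D\forall$ clause does not resist. After \textsc{Mixed}.2 you only need $\Diamond_G\forall x\,\alpha^\ast\to\forall x\,\alpha^\ast$, which follows from $\Diamond_G\forall x\,\alpha^\ast\to\Diamond_G\alpha^\ast(y)$ and the induction hypothesis.
\item The step you label CBF is really $\exists x\,\Box_D\varphi\to\Box_D\exists x\,\varphi$, which is derived the same way.
\item In (b) the appeal to Lemma~\ref{Lem:ModalCollapse} is unnecessary. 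Your collapse lemma already yields $\neg\varphi^\ast\to\Box_G\neg\varphi^\ast$ in BM itself, so Reverse Subsumption is needed only to upgrade $\Box_G$ to $\Box_D$.
\end{enumerate}
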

\textit{Proof}. 
(a) and (b) follow from Theorem~\ref{thm:mirroring-G} combined with Lemmas~\ref{lem:pot-BM-FOL} and \ref{lem:unfaithful} below, respectively, as spelled out more fully below. $\dashv$

\subsection{An application to strict potentialist arithmetic}

To illustrate the bimodal mirroring of Theorem~\ref{thm:BM-FOL}(a), let me disgress to apply the result to strict potentialist arithmetic. Since the application is straightforward and holds no surprises, I will be very brief. 

We begin by formulating Heyting Arithmetic in a relational signature. Thus, instead of a successor function, we have the axiom $\forall m \exists n \textsc{Succ}(m,n)$; likewise for all the other axioms. 

Next, we apply the translation $\ast$.
We observe that each of the resulting statements is justified from a strict potentialist point of view. For example, the availability of the successor-of operation makes it determinate that for any given number we can generate its successor: 
\begin{equation}
    \Box_D \forall m \Diamond_G \exists n \textsc{Succ}(m,n)
\end{equation}

Finally, we invoke 
Theorem~\ref{thm:BM-FOL}(a) to observe that this bimodal theory corresponds to the version of HA with which we started. 

The last three sections develop applications of my bimodal analysis of strict potentialism that are more interesting, conceptually as well as mathematically.

\subsection{The full story} \label{sec:FO-fullstory}

As observed, there are four options, depending on which of the two modalities is switched on or off. The following table displays the corresponding theories that we will now consider: 
\begin{center}
\begin{tabular}{c|cc}
     & $\Diamond_G$ off & $\Diamond_G$ on 
     \\ \hline
  $\Box_D$ off & I-FOL &S4.2-I-FOL\\
  $\Box_D$ on & S4-FOL & BM-FOL\\
\end{tabular}
\end{center}
\vspace{2mm}
As stated, I-FOL and BM-FOL are intuitionistic and bimodal first-order logic, respectively. S4-FOL is classical S4 added to FOL, and S4.2-I-FOL is as described in Section~\ref{sec:potmirror}. 
Both of the systems on the right-hand side include the G-stability of atomic predications, formulated as $\Diamond_G \varphi \to \Box_G \varphi$. 

\begin{Lem}\label{lem:pot-BM-FOL}
    The potentialist translation provides a faithful interpretation of each system on the left of the above table in the corresponding system on the right.\footnote{By `interpretation', I mean an embedding based on a  translation that commutes with the logical connectives.} 
\end{Lem}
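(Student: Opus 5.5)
The lemma has two rows, and I will treat each separately.

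\emph{Top row: I-FOL into S4.2-I-FOL.} This is exactly Theorem~\ref{thm:int-pot-FOL-mirror}, so nothing new is needed.

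\emph{Bottom row: S4-FOL into BM-FOL.} This is the real content. Here the potentialist translation is extended to the language with $\Box_D$. It replaces each $\forall x$ by $\Box_G\forall x$ and each $\exists x$ by $\Diamond_G\exists x$. It commutes with the connectives and with $\Box_D$.

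\emph{Embedding direction.} I would proceed by induction on derivations in S4-FOL, in the style of \cite[Thm.~5.4]{Linnebo:2013-PHS}.

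The first key step is a stability lemma generalizing Lemma~\ref{Lem:ModalCollapse}. Every formula in the image of the translation, now possibly containing $\Box_D$, is G-stable: $\Diamond_G\varphi\to\Box_G\varphi$. The induction runs as follows.
\begin{itemize}
\item Atomic formulas are G-stable by axiom.
\item Connectives and modalized quantifiers are handled exactly as in Lemma~\ref{Lem:ModalCollapse}, using S4.2 for $\Box_G$.
\item The new case is $\Box_D\psi$, and this is where I expect the main obstacle. For positive stability, $\Box_D\psi\to\Box_D\Box_D\psi\to\Box_G\Box_D\psi$ follows from 4 and \textsc{Subsump}. For negative stability I would argue as follows. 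From $\Diamond_G\Box_D\psi$, \textsc{Mixed}.2 gives $\Box_D\Diamond_G\psi$. The induction hypothesis then yields $\Box_D\Box_G\psi$, hence $\Box_D\psi$ via T for $\Box_G$. Stability of $\psi$ gives $\Diamond_G\psi\to\psi$, and necessitating this under $\Box_D$ gives $\Box_D\Diamond_G\psi\to\Box_D\psi$.
\end{itemize}
So $\Diamond_G\Box_D\psi\to\Box_D\psi$, and combined with positive stability this is stability. If this fails to go through, I would check whether \textsc{Mixed}.2 must be used in a strengthened, necessitated form.

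With stability in hand, the inductive cases go as follows.
\begin{itemize}
\item The quantifier rules go through as in the classical potentialist case. For instance, $\forall$-elimination uses that a witness available at the current world lies in the domain of $\Box_G\forall$, by T for $\Box_G$. Reasoning under $\Diamond_G\exists$ uses stability to discharge the outer $\Diamond_G$.
\item The S4 axioms for $\Box_D$ translate to instances of themselves.
\item Necessitation for $\Box_D$ is preserved, since the translation commutes with $\Box_D$.
\end{itemize}

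\emph{Faithfulness.} I would argue semantically, by contraposition. Suppose $\varphi_1,\dots,\varphi_n\nvdash_{\textrm{S4}}\psi$. Take an S4 Kripke countermodel with constant domains, or with increasing domains if CBF is what the system proves. Form a BM model by letting the G-relation be the identity on worlds and the D-relation be the original accessibility relation.
\begin{itemize}
\item Identity is S4.2.
\item Atomic predications are trivially G-stable.
\item \textsc{Subsump} holds because D-accessibility is reflexive.
\item \textsc{Mixed}.2 reduces to $\Box_D\varphi\to\Box_D\varphi$.
\end{itemize}
In this model $\Box_G$ and $\Diamond_G$ are vacuous, so $\chi^\Diamond$ and $\chi$ have the same truth value at every world. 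Hence the countermodel refutes the translated sequent, and soundness of BM-FOL gives faithfulness. The top row's faithfulness is likewise covered by Theorem~\ref{thm:int-pot-FOL-mirror}.
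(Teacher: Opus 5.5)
Your proof is correct and follows the paper's route. The top row is Theorem~\ref{thm:int-pot-FOL-mirror}; for the bottom row you extend the translation by $\Box\mapsto\Box_D$, check the S4 axioms and Necessitation against BM, and obtain faithfulness by trivializing the G-modality. Your countermodel with $\leq_G$ the identity is the semantic version of the paper's syntactic ``dropping the G-modality''; the syntactic erasure has the advantage of needing no Kripke completeness for S4-FOL. Your explicit check that $\Box_D\psi$ is G-stable is right and supplies a step the paper leaves implicit when it ``builds on'' Theorem~\ref{thm:class-pot-FOL-mirror}: 4 for $\Box_D$ with \textsc{Subsump} gives the positive half, and \textsc{Mixed}.2 with D-necessitation of the induction hypothesis gives the negative half.
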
 

\noindent \textit{Proof}. The upper row is the intuitionistic potentialist mirroring of Theorem~\ref{thm:int-pot-FOL-mirror}. 

Next, consider the lower row. We extend the potentialist translation $\varphi \mapsto \varphi^{\Diamond}$ to the language of S4-FOL by translating $\Box \varphi$ as $\Box_D (\varphi^{\Diamond_G})$ and otherwise proceeding as on the ordinary potentialist translation, only using G-modality. (For example, $\exists x \, \varphi$ is translated as $\Diamond_G \exists x \,\varphi^{\Diamond_G}$.) We build on the classical potentialist mirroring of Theorem~\ref{thm:class-pot-FOL-mirror}. 
To show that we have an interpretation, we observe that each axiom of S4 is mapped to a corresponding axiom of BM, and that Necessitation corresponds to D-necessitation. We prove that the interpretation is faithful by dropping the G-modality, as in the proof of Theorem~\ref{thm:class-pot-FOL-mirror}. $\dashv$

\vspace{4mm}

We wish to compare the pair of systems in each column of the above table, using the G\"odel translation. We already know from Theorem~\ref{thm:mirroring-G} that the G\"odel translation provides a faithful embedding of I-FOL in S4-FOL. This handles the left column. 

Next, consider the right-hand column of the table. We extend the G\"odel translation to the language of S4.2-I-FOL. The connectives and quantifiers are translated as in the ordinary G\"odel translation, just using D-modality. The modal operators are translated as follows: 
$\Diamond \varphi \mapsto	\Diamond_G \varphi^g$ and 
$\Box\varphi \mapsto \Box_D \varphi^g$. 
\begin{Lem}\label{lem:unfaithful}
    This extended G\"odel translation provides an  embedding of S4.2-I-FOL in 
    BM-FOL, which is not faithful.  
\end{Lem}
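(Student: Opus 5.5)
The proof has two parts. First, every theorem of S4.2-I-FOL is carried by the extended G\"odel translation to a theorem of BM-FOL. Second, some non-theorem of S4.2-I-FOL nonetheless has a translation provable in BM-FOL.

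\emph{Embedding.} I would argue by induction on derivations in S4.2-I-FOL, following the familiar soundness proof of the G\"odel translation into S4 (Theorem~\ref{thm:mirroring-G}).

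The intuitionistic rules and axioms are handled exactly as there. This uses only the S4 logic of $\Box_D$ and the observation, analogous to Lemma~\ref{Lem:G-stab}, that every translated formula $\varphi^g$ is positively $D$-stable. For the new clause $\Diamond \varphi \mapsto \Diamond_G \varphi^g$, positive stability is exactly what (\ref{eq:mixed-G}) provides: from $\Diamond_G\varphi^g$, which is $\Diamond_G\Box_D\varphi^g$, we get $\Box_D\Diamond_G\varphi^g$.

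The modal axioms are then checked one at a time:
\begin{itemize}
\item T and 4 for $\Box$ go over to T and 4 for $\Box_D$.
\item Necessitation goes over to $D$-necessitation.
\item The mixed intuitionistic axioms linking $\Box$ and $\Diamond$, such as $\Box(\varphi\to\psi)\to(\Diamond\varphi\to\Diamond\psi)$, use (\ref{eq:subsump}) to pass from $\Box_D$ to $\Box_G$, followed by normal reasoning for $G$.
\item Axiom .2, $\Diamond\Box\varphi\to\Box\Diamond\varphi$, becomes $\Diamond_G\Box_D\varphi^g\to\Box_D\Diamond_G\varphi^g$, which is literally (\ref{eq:mixed-G}).
\item The stability axioms $\Diamond\varphi\to\Box\varphi$ for atomic $\varphi$ become $\Diamond_G\Box_D\varphi\to\Box_D\Box_D\varphi$. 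This follows from (\ref{eq:mixed-G}), the G-stability of atoms, and (\ref{eq:subsump}). The point that needs care is obtaining $\Box_D\varphi$ from $\Box_D\Diamond_G\varphi$. I expect it to come from G-stability holding determinately, i.e.\ $\Box_D(\Diamond_G\varphi\to\varphi)$ for atomic $\varphi$, which the paper's setup is meant to supply.
\end{itemize}
This bookkeeping is routine but tedious; the stability case is where the axioms of BM are genuinely used.

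\emph{Non-faithfulness.} I expect this to be the main obstacle. The plan is to find an S4.2-I-FOL non-theorem whose translation holds in BM because the $\Box_G$ and $\Diamond_G$ of the target are \emph{classical} S4.2 modalities.

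The natural candidate is a principle that is classically valid in S4.2 but fails in intuitionistic S4.2, for instance the duality $\neg\Box\neg\varphi\to\Diamond\varphi$. Its translation is
\[
\Box_D(\Box_D\neg\Box_D\neg\varphi^g\to\Diamond_G\varphi^g).
\]
This may not be provable, since the translation of $\Box$ is $\Box_D$, not $\Box_G$. If it fails, I would instead use a principle that involves only $\Diamond$, for example $\Diamond(\varphi\vee\psi)\to\Diamond\varphi\vee\Diamond\psi$. This holds in classical modal logic, and its translation $\Diamond_G(\varphi^g\vee\psi^g)\to\Diamond_G\varphi^g\vee\Diamond_G\psi^g$ is valid by normal modal logic for $G$. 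Intuitionistic S4.2 in Simpson's sense does validate this distribution, however, so it would not serve as a counterexample. I would therefore look for an instance of classical reasoning inside the scope of $\Diamond_G$ that survives translation. A good candidate is $\Diamond\neg\neg\varphi\to\Diamond\varphi$. It fails in intuitionistic S4.2: take a Kripke model with a G-accessible point where $\neg\neg\varphi$ holds but $\varphi$ does not. Its translation is $\Diamond_G\Box_D\neg\Box_D\neg\Box_D\varphi\to\Diamond_G\Box_D\varphi$, whose validity in BM depends on whether BM forces the relevant G-worlds to be D-maximal.

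To certify the non-theorem half, I would build an explicit birelational intuitionistic Kripke model. For the BM half, I would give a syntactic derivation using classical logic under $\Diamond_G$ together with (\ref{eq:mixed-G}).
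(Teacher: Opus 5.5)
You have the embedding half right, and it is essentially the paper's axiom-by-axiom check. T and 4 go to T and 4 for $\Box_D$ and $\Diamond_G$, axiom .2 goes to (\textsc{Mixed}.2), and Necessitation goes to $D$-necessitation. The worry you flag in the stability case dissolves. $\Diamond_G p\to\Box_G p$ is an axiom, so $D$-necessitation gives $\Box_D(\Diamond_G p\to\Box_G p)$. Combined with $\Box_D\Diamond_G p$ (from (\textsc{Mixed}.2)) and T for $\Box_G$, this yields $\Box_D p$.

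The genuine gap is the non-faithfulness half: you never exhibit a witness, and the candidate you leave open does not work. Take the BM-model in which $\le_G$ is the identity and $\le_D$ is the chain $w\le_D v$, with atomic $p$ true only at $v$. All BM frame conditions hold trivially (for mixed convergence take $w_3=w_2$), and $G$-stability of atoms is automatic. At $w$ we have $\Box_D\neg\Box_D\neg\Box_D p$, since every $D$-successor of $w$ sees $v$. Hence $\Diamond_G\Box_D\neg\Box_D\neg\Box_D p$ also holds at $w$, but $\Diamond_G\Box_D p$ fails there. So the translation of $\Diamond\neg\neg p\to\Diamond p$ is not a BM-theorem, and the same model refutes the translation of $\neg\Box\neg p\to\Diamond p$.

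The missing idea concerns where unfaithfulness comes from. It does not come from classical reasoning under $\Diamond_G$: the translation puts $\Box_D$ throughout the scope of $\Diamond_G$, so the intuitionistic structure survives there. It comes from the fact that the object-language $\Box$ is sent to the very modality $\Box_D$ that drives the G\"odel translation.

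You already had the key fact in hand: every $\varphi^g$ is positively $D$-stable. Hence $\varphi\to\Box\varphi$ translates to $\Box_D(\varphi^g\to\Box_D\varphi^g)$, which is a BM-theorem, although $\varphi\to\Box\varphi$ is not a theorem scheme of S4.2-I-FOL. This is exactly the paper's witness.

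Use a non-atomic instance, since for atomic $\varphi$ the formula follows from $\varphi\to\Diamond\varphi$ and stability. For example, $\forall x\,Px\to\Box\forall x\,Px$ translates to an instance of 4 under $\Box_D$. Yet it fails at $w$ in the increasing-domain S4.2 model with worlds $w\le v$, domains $\{a\}$ and $\{a,b\}$, $Pa$ true throughout and $Pb$ false. That model satisfies even classical S4.2 with stable atoms, so the formula is not provable in S4.2-I-FOL.
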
 
\textit{Proof}. We build on 
Theorem~\ref{thm:mirroring-G}, which shows that I-FOL is embedded in BM-FOL. Next, it is routine to show that each modal axiom of intuitionistic S4.2 translates as a theorem of BM. For example, the two parts of Axiom 4, $\Box \varphi \to \Box \Box \varphi$ and $\Diamond\Diamond \varphi \to \Diamond \varphi$ translate as $\Box_D \varphi \to \Box_D \Box_D \varphi$ and $\Diamond_G\Diamond_G \varphi \to \Diamond_G \varphi$. And Axiom G translates as the D-necessity of (\ref{eq:mixed-G}). The rule of Necessitation of S4.2 translates as G-Necessiation in BM. 

To show that the embedding is not faithful, consider $\varphi \to \Box \varphi$, which is not a theorem of S4.2-I-FOL. But the (extended) G\"odel translation of this formula is a theorem of BM-FOL, which proves $\varphi^g \to \Box_D \varphi^g$. The proof of this latter claim goes by induction on syntactic complexity, as in the analogous result, Lemma~\ref{Lem:G-stab}, for the ordinary G\"odel translation.  $\dashv$

\begin{Lem}\label{Lem:RS}
    When Reverse Subsumption is added, the translation $\ast$ is equivalent in BM to the potentialist translation $\Diamond$.
\end{Lem}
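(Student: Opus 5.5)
The plan is a proof by induction on the complexity of a formula $\varphi$ of $\L$. I will show that BM together with (\ref{eq:subsump}) and Reverse Subsumption (added as an axiom schema) proves $\varphi^\ast \leftrightarrow \varphi^\Diamond$. Here $\varphi^\Diamond$ is the potentialist translation using G-modality: atomic formulas and connectives are left unchanged, $\forall x$ becomes $\Box_G\forall x$, and $\exists x$ becomes $\Diamond_G\exists x$.

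The first step is to record the collapse that Reverse Subsumption produces. (\ref{eq:subsump}) and Reverse Subsumption together give $\Box_D\psi \leftrightarrow \Box_G\psi$ for every $\psi$. Since I am treating Reverse Subsumption as an axiom schema of the extended system, every theorem is available under both D- and G-necessitation. So equivalents may be substituted inside the scope of either modal operator. This is what lets the induction hypothesis be used underneath $\Box_D$ and $\Diamond_G$. The second ingredient is Lemma~\ref{Lem:ModalCollapse}. G-modality in BM is S4.2 and atomic predications are G-stable, so every fully modalized formula $\chi$ satisfies $\chi \leftrightarrow \Box_G\chi \leftrightarrow \Diamond_G\chi$. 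I also note that the fully modalized formulas are closed under the connectives and under the modalized quantifiers. So each right-hand side $\varphi^\Diamond$ that appears below is fully modalized and hence stable.

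The induction runs as follows.
\begin{itemize}
\item \emph{Atomic case.} $\varphi^\ast = \Box_D\varphi \leftrightarrow \Box_G\varphi \leftrightarrow \varphi$. The last step holds because atomic formulas are G-stable, so $\varphi \to \Box_G\varphi$, and the converse is the T axiom.
\item \emph{Conjunction and disjunction.} These are immediate from the induction hypothesis.
\item \emph{Implication.} $\Box_D(\varphi^\ast\to\psi^\ast)$ is equivalent to $\Box_D(\varphi^\Diamond\to\psi^\Diamond)$, by the necessitated induction hypothesis. This is in turn equivalent to $\Box_G(\varphi^\Diamond\to\psi^\Diamond)$, by the collapse. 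Since $\varphi^\Diamond\to\psi^\Diamond$ is fully modalized, Lemma~\ref{Lem:ModalCollapse} lets us drop the $\Box_G$.
\item \emph{Negation.} This is the same argument as for implication.
\item \emph{Universal quantifier.} $\Box_D\forall x\,\varphi^\ast \leftrightarrow \Box_D\forall x\,\varphi^\Diamond \leftrightarrow \Box_G\forall x\,\varphi^\Diamond$, and the last formula is exactly $(\forall x\,\varphi)^\Diamond$.
\item \emph{Existential quantifier.} The translation already has the form $\Diamond_G\exists x$, so only the induction hypothesis under G-necessitation is needed.
\end{itemize}

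I expect the main obstacle to be status rather than calculation. Section~3 phrases Reverse Subsumption as something that ``holds at some world''. The lemma, however, needs it as a schema closed under necessitation, so that the equivalences from the induction hypothesis can be pushed inside $\Box_D$ and $\Diamond_G$. I would therefore first fix the statement to mean BM plus the schema Reverse Subsumption.

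A related check concerns the implication and negation cases. There the stability supplied by Lemma~\ref{Lem:ModalCollapse} is needed for formulas containing free variables. Stability with parameters is precisely what the lemma provides, given S4.2 for G-modality, the Converse Barcan Formula, and the G-stability of atomic predications. Once these points are settled, the remaining cases are routine.
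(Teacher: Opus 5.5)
Your proposal is correct and takes essentially the paper's route. The paper's proof is the brief observation that (\ref{eq:subsump}) plus Reverse Subsumption make $\Box_D$ and $\Box_G$ provably equivalent S4.2 modalities, so that Lemma~\ref{Lem:ModalCollapse} lets the extra G\"odel-style boxes of $\ast$ be dropped; your induction spells this out case by case, and your reading of Reverse Subsumption as an axiom schema closed under necessitation is the one that proof presupposes.
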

\emph{Proof}. BM plus Reverse Subsumption has two provably equivalent modalities, both obeying S4.2. In the context of this modal system, Lemma~\ref{Lem:ModalCollapse} entails that the  translation $\ast$ is equivalent to the potentialist translation. $\dashv$

\vspace{4mm}
It remains to spell out the compressed parts of the proof of Theorem~\ref{thm:BM-FOL}. For (a), we regard the composite translation as the G\"odel translation followed by the potentialist translation. The result is then immediate from Theorem~\ref{thm:mirroring-G} and Lemma~\ref{lem:pot-BM-FOL}, since the composition of two faithful embeddings yields another such. (By Lemma~\ref{lem:unfaithful}, it is essential for the proof that the translations be composed in the mentioned order.) Claim (b) follows from Lemma~\ref{Lem:RS} and the classical potentialist mirroring of Theorem~\ref{thm:class-pot-FOL-mirror}.


\section{Potentialist Kripke semantics 
}\label{sec:PotKripkeSem}

Ordinary Kripke semantics for intuitionistic logic is deeply inhospitable to potentialism. Consider the claim that every number has a successor, $\forall x \exists y \, \textsc{Succ}(x,y)$. Appropriated interpreted, this is something that potentialists too accept. With ordinary Kripke semantics, however, a world satisfies this claim only if its domain contains infinitely many numbers, if any. And this is precisely what potentialists deny. I will now pause to observe how the translations from I-FOL to BM-FOL that we have just described point the way to a potentialist-friendly variant of Kripke semantics for intuitionistic first-order logic.\footnote{This variant is an alternative to the unimodal potentialist-friendly Beth-Kripke semantics of \cite{Brauer-Linnebo-Shapiro:DivPot}. This unimodal semantics uses a new modal primitive of inevitability.} 

We work with the simplified translation $\ast$ of Lemma~\ref{lem:composite-trans}. The key idea is to absorb this translation into the semantic clauses for the logical operators by defining a relation $\Vdash$ between worlds and formulas. For example, we let $w \Vdash \exists x \, \varphi(x)$ iff there is a G-extension $w'$ of $w$ with an object $a$ such that $w' \Vdash \varphi(a)$, and $w \Vdash \forall x \, \varphi(x)$ iff for every D-extension $w'$ of $w$ and every object $a$ at $w'$ we have $w' \Vdash \varphi(a)$. \textit{Mutatis mutandis} for the other cases. Crucially, the semantics is designed to ensure the following Link:
\begin{equation}
    w \Vdash \varphi \textrm{ \quad iff \quad } w \models \varphi^\ast    \tag{Link}
\end{equation}
where $\models$ is satisfaction in the classical bimodal semantics. 

Furthermore, we consider bimodal frames that satisfy all the requirements articulated in Section~\ref{sec:bimodal}. To review: the accessibility relation $\leq_D$ is reflexive and transitive; $\leq_G$ is additionally convergent; moreover, $\leq_G$ is contained in $\leq_D$; and finally, we have the mixed convergence property that whenever $w_0$ has a G-extension $w_1$ and a D-extension $w_2$, then there is $w_3$ that D-extends $w_1$ and G-extends $w_2$: 

\[
\xymatrix{
 & w_3 &  \\
w_1 \ar@{.>}[ur]^D &  & w_2 \ar@{.>}[ul]_G \\
& w_0 \ar@{->}[ul]^G \ar@{->}[ur]_D & }
\]

Let \textit{potentialist Kripke semantics} be based on these bimodal frames and the relation $\Vdash$. The value of this semantics is underwritten by our next result.  

\begin{Thm}
Intuitionistic first-order logic is sound and complete with respect to potentialist Kripke semantics. 
\end{Thm}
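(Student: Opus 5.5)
The plan is to reduce both directions to results already in hand, using the Link between $\Vdash$ and the classical bimodal semantics $\models$. First I will verify the Link itself by induction on the complexity of $\varphi$. Each clause of $\Vdash$ is defined to mirror the corresponding clause of $\ast$ from Lemma~\ref{lem:composite-trans}, so each case of the induction is immediate from the classical bimodal truth conditions. For instance, $w\Vdash P(a)$ iff $P(a)$ holds at every D-extension of $w$, iff $w\models\Box_D P(a)$; and the $\exists$ and $\forall$ clauses match $\Diamond_G\exists x\,\varphi^\ast$ and $\Box_D\forall x\,\varphi^\ast$ directly. With the Link in place, intuitionistic validity in potentialist Kripke semantics means the following: whenever the $\varphi_i$ hold at $w$ in this sense, so does $\psi$. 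That is exactly the statement that $\varphi_1^\ast,\ldots,\varphi_n^\ast\models\psi^\ast$ holds locally over the class of bimodal frames described above, for classical bimodal models with increasing domains along both relations.

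\textbf{Soundness.} Suppose $\varphi_1,\ldots,\varphi_n\vdash_{\textrm{int}}\psi$. By Theorem~\ref{thm:BM-FOL}(a), $\varphi_1^\ast,\ldots,\varphi_n^\ast\vdash_{\textrm{BM}}\psi^\ast$. It therefore suffices to check that BM-FOL is sound for the frames in question. The required checks are:
\begin{itemize}
\item S4 for $\leq_D$ comes from reflexivity and transitivity of $\leq_D$;
\item S4.2 for $\leq_G$ comes from convergence of $\leq_G$;
\item Subsumption comes from $\leq_G\subseteq\leq_D$;
\item \textsc{Mixed}.2 comes from the mixed convergence diagram;
\item G-stability of atoms, and CBF, come from requiring domains and atomic extensions to be preserved along $\leq_G$ (and domains along $\leq_D$).
\end{itemize}
Each check is a routine frame-correspondence argument. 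The Link then transfers truth back to $\Vdash$.

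\textbf{Completeness.} Rather than building a canonical bimodal model, I would embed ordinary Kripke models. Suppose $\varphi_1,\ldots,\varphi_n\nvdash_{\textrm{int}}\psi$. Standard Kripke completeness for I-FOL gives a model $(W,\leq,D,I)$ with monotone domains and a node $w_0$ where the $\varphi_i$ are forced and $\psi$ is not. I then turn this into a bimodal frame by setting $\leq_D=\leq$ and letting $\leq_G$ be the identity relation. Identity is trivially reflexive, transitive and convergent, and it is contained in $\leq_D$. Mixed convergence holds with $w_3=w_2$: if $w_0\leq_G w_1$ then $w_1=w_0$, so $w_2$ D-extends $w_1$ and G-extends itself. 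With $\leq_G$ trivial, the potentialist clause for $\exists$ collapses to the ordinary Kripke clause, and every other clause already coincides with the ordinary one. So $\Vdash$ agrees with ordinary forcing, and the countermodel carries over.

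\textbf{Main obstacle.} The step needing most care is confirming that the class of frames is exactly right for soundness. In particular, I must check the conditions on domains and on atomic interpretations along $\leq_G$ and $\leq_D$: that atoms are G-stable, and that $\Box_D$ of an atom persists. This is what lets the inductive Link and the monotonicity (persistence) lemma for $\Vdash$ go through. Persistence along $\leq_D$ follows from Lemma~\ref{Lem:G-stab}, transferred through the Link. If the identity-$\leq_G$ trick were somehow excluded by the intended frames (for example, if nontrivial generation were demanded), I would instead pair each Kripke node with a G-chain that duplicates it, adding no new objects, so that $\Diamond_G\exists$ still reduces to $\exists$ at the node.
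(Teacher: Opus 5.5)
Your proposal is correct and follows the paper's proof. Soundness comes from Theorem~\ref{thm:BM-FOL}(a), the validity of BM-FOL on the required bimodal frames, and (Link); completeness comes from regarding an ordinary Kripke countermodel as a bimodal model with $\leq_G$ the identity. (A minor correction: persistence of $\Vdash$ along $\leq_D$ is not Lemma~\ref{Lem:G-stab} itself but its BM analogue for $\ast$, proved as in Lemma~\ref{lem:unfaithful} with \textsc{Mixed}.2 handling the $\Diamond_G\exists$ clause, and soundness does not need it anyway.)
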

\emph{Proof}. Clearly, any bimodal frame complying with the mentioned requirements satisfies the logic BM-FOL. Thus, by Theorem~\ref{thm:BM-FOL}(a) 
and (Link), we obtain the soundness claim. For the completeness, we use the fact that I-FOL is complete with respect to ordinary Kripke semantics, and that every ordinary Kripke model can also be regarded as a bimodal Kripke model where $\leq_G$ has shrunk to just to the identity relation. $\dashv$

\vspace{4mm}
\noindent We will see in Section~\ref{sec:PL-essentials} how this semantics can be extended to intuitionistic plural logic.

\section{Classical plural mirroring: a review}\label{sec:mono-mirroring-PL}

Our aim is to establish plural \textit{bimodal} mirroring. As a final stepping stone, this section introduces some systems of classical plural logic and states a known result about \textit{unimodal} plural mirroring.

\subsection{Minimal plural logic} \label{sec:MPL}


We start with the usual axioms and inference rules for the plural quantifiers, excluding comprehension axioms. Then we add an extensionality principle for pluralities, to the effect that any two coextensive pluralities are indiscernible (with respect to non-intensional contexts). Lastly, since pluralities are arbitrary, combinatorial collections, we add the following principle of Plural Choice:\footnote{All axiom schemes are understood to include instances from any language in which they
figure, including the modal language to be adopted shortly.} 
\begin{quote}
    Assume $(\forall x \prec xx)( \exists y \prec yy) \psi(x,y)$ and $\psi(x,y) \wedge \psi(x',y) \to x=x'$. Then there are $zz$ such that $(\forall x \prec xx )(\exists! y \prec zz ) \psi(x,y)$.
\end{quote} 
We call the resulting system \textit{Minimal Plural Logic}.\footnote{One might want to require that every plurality be non-empty, in which case we would need the assumption that there be at least one $\varphi$. Nothing of mathematical interest hangs on our decision to waive this requirement.}

\subsection{Plural comprehension}

Stronger plural logics are obtained by adding various plural comprehension schemes. The strongest of these systems is \textit{Traditional Plural Logic}, which adds an unrestricted plural comprehension scheme:
\begin{equation}
    \exists yy \forall x (x \prec yy \leftrightarrow \varphi(x)) \tag{P-Comp}
\end{equation}

We will also be interested in some more restrictive systems. Let \textit{Basic Plural Logic} (BPL) be the plural logic that adds to Minimal Plural Logic the following four axioms or axioms schemes. 
First, there is an empty plurality. Second, we have an axiom of adjunction: 
\begin{equation}\label{eq:P-Adj}
    \forall x \forall xx \exists yy \forall y (y \prec yy \leftrightarrow y \prec xx \wedge y = x) \tag{P-Adj}
\end{equation}
Next, we adopt an axiom of plural pairwise union:
\begin{equation} \label{eq:P-Union}
    \exists zz \forall x (x \prec zz \leftrightarrow x \prec xx \vee x \prec yy) \tag{P-Union}
\end{equation}
Finally, we add an axiom scheme of plural separation: 
\begin{equation}\label{eq:P-Sep}
    \exists yy \forall x (x \prec yy \leftrightarrow x \prec xx \wedge \varphi(x)) \tag{P-Sep}
\end{equation}
I will eventually, in Section~\ref{sec:DP},  recommend a stronger, though still non-traditional, plural logic.

\subsection{The modal rigidity of pluralities}\label{sec:modallogicplurals}

We turn now to logical principles concerning the interaction of modals and plurals. Our guiding idea is that every plurality is modally rigid. Consider some objects. Wherever these objects exist, they comprise the very same objects. After all, all we have to go on when tracking these objects across different possibilities are precisely \textit{these objects}. 

First, we lay down that both plural membership and plural identity are stable: 
\begin{align*}\label{eq:stb-prec}  \tag{\textsc{Stb}-$\prec$}        x \prec yy \rightarrow\Box x \prec yy && \lnot x \prec yy \rightarrow\Box \lnot x \prec yy 
\end{align*}
and likewise for identity. 
Second, we adopt the following inextendability principles for $\prec$ and $\preccurlyeq$, to the effect that a plurality cannot gain members or subpluralities as we move to more populous ``worlds'':\footnote{These principles are Barcan formulas for the restricted quantifier $\forall x \prec yy)$ and its plural analogue. See \cite[pp.~211-12]{Linnebo:2013-PHS} for further explanation. }    
\begin{gather*}
    \forall x (x \prec yy \to \Box \theta) \to \Box \forall x (x \prec yy \to \theta) \label{eq:Inext-prec} \tag{\textsc{InExt}-$\prec$}\\
    \forall xx (xx \preccurlyeq yy \to \Box \theta) \to \Box \forall xx (xx \preccurlyeq yy \to \theta)\tag{\textsc{InExt}-$\preccurlyeq$}\label{eq:Inext-preccurlyeq}
\end{gather*}
I will refer to the axioms described in this subsection as \textit{the modal rigidity axioms}. 

\subsection{Potentialist mirroring for classical plural logic}\label{sec:pluralpotmirror}

Let \textit{Modal Traditional Plural Logic (M-TPL)} be the system obtained from Traditional Plural Logic by adding S4.2, the stability axioms for atomic predicates, and principles of modal rigidity just described. We can now state the promised classical plural mirroring theorem.\footnote{See \cite{Linnebo:PotDemodal} for a proof. The theorem was first presented in a talk I gave to the Oslo Logic Seminar on 19 November 2015. In fact, by tweaking the two systems, this theorem can be extended to a result about definitional equivalence. I cannot see an analogous extension for the bimodal mirroring theorems stated below. }

\begin{Thm}[Classical Plural Mirroring]\label{thm:plural-mirroring}
    The potentialist translation $\varphi \mapsto \varphi^\Diamond$ provides a faithful interpretation of BPL in M-TPL. That is, we have: 
    \[
\varphi_1, \ldots, \varphi_n \vdash^{\text{BPL}} \psi \text{\quad iff \quad} \varphi^\Diamond_1, \ldots, \varphi^\Diamond_n \vdash^\text{M-TPL} \psi^\Diamond
\]
\end{Thm}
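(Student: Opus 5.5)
The proof splits into the two directions of the biconditional: soundness (left to right) and faithfulness (right to left).

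\emph{Soundness.} I would argue by induction on derivations in BPL, extending the proof of Theorem~\ref{thm:class-pot-FOL-mirror} to the plural quantifiers. The first-order part carries over: the rules for $\forall,\exists$ are mirrored by rules for $\Box\forall,\Diamond\exists$. This uses Lemma~\ref{Lem:ModalCollapse}, which must first be extended to plural fully modalized formulas. The extension is an induction on complexity in which the base case for $\prec$ is handled by \textsc{Stb}-$\prec$, and plural quantifier steps are handled exactly like first-order ones via S4.2.

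With stability in hand, each BPL axiom is checked under translation.
\begin{itemize}
\item \emph{Empty plurality.} M-TPL yields one by P-Comp, and P-Comp also gives $\Diamond\exists yy$ of it.
\item \emph{Adjunction and union.} P-Comp at the current world gives the required plurality outright, and that yields $\Box\forall\Diamond\exists$.
\item \emph{Plural Choice and extensionality.} These translate to instances of the M-TPL versions once stability lets us drop the inner modalities.
\item \emph{P-Sep.} This is the delicate case, since the translated separation condition $\varphi^\Diamond$ quantifies over all possible objects. By Lemma~\ref{Lem:ModalCollapse}, $\varphi^\Diamond(x)$ is stable, so P-Comp applied to $x\prec xx\wedge\varphi^\Diamond(x)$ at the current world gives a subplurality $yy$. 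The rigidity principles \textsc{InExt}-$\prec$ and \textsc{Stb}-$\prec$ then ensure the biconditional holds not just here but under $\Box\forall x$, which is what the translation demands.
\end{itemize}

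\emph{Faithfulness.} I would proceed semantically, by contraposition. Suppose $\varphi_1,\dots,\varphi_n\nvdash^{\text{BPL}}\psi$, and take a model $M$ of BPL plus the $\varphi_i$ and $\neg\psi$. From $M$ I would build a Kripke model of M-TPL. Its worlds are the finite subsets $s$ of the domain, or more generally the subsets that are themselves pluralities of $M$, ordered by inclusion; this order is directed, so S4.2 holds. Atomic predicates are interpreted as in $M$, which secures stability. The plural variables at world $s$ range over all subsets of $s$, which makes P-Comp true at each world and makes pluralities rigid and inextendable.

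The key lemma is then: for a fully modalized $\chi^\Diamond$ evaluated at any world, with parameters from that world, the value agrees with the value of $\chi$ in $M$. The base case is immediate. For first-order quantifiers, $\Diamond\exists x$ ranges over all objects of $M$, since any object can be added to a world. For plural quantifiers, $\Diamond\exists xx$ ranges over the pluralities available at some extension.

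\emph{Main obstacle.} The crux is that in $M$ the plural quantifiers range only over $M$'s pluralities, and these may not be all subsets. So I would restrict worlds to pairs $(s,\cdot)$ where $s$ is a plurality of $M$, and let plural variables at $s$ range over the subpluralities of $s$ in $M$, which exist by P-Sep. I must then check that P-Comp holds at each world for arbitrary modal formulas. If worlds are finite, every subset is obtained from the empty plurality by adjunction, so it is a plurality. For infinite worlds I would instead verify that definable subclasses of a world are subpluralities, using P-Sep together with the stability lemma, which reduces any modal condition to a first-order condition over $M$ with parameters.

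Alternatively, following Theorem~\ref{thm:class-pot-FOL-mirror}, I would try a syntactic route: erase the modalities from an M-TPL proof and restrict P-Comp instances via P-Sep, Adjunction and Union. Either way, the step showing that the translated comprehension needed is exactly BPL's is where I expect the real work to lie.
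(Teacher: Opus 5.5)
Your soundness direction is fine and follows the same route as the paper: check that each translated BPL axiom is an M-TPL theorem, using \textsc{Stb}-$\prec$, \textsc{InExt}-$\prec$ and the stability lemma. The gap is in faithfulness, at exactly the step you call the crux.

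Plural comprehension in M-TPL holds for \emph{arbitrary} modal conditions, not just fully modalized ones. Two examples show the range:
\begin{itemize}
\item $\exists y\, R(x,y)$, whose extension at a world $s$ depends on $s$;
\item $\Diamond(\exists y\, R(x,y) \wedge \neg \exists z\, Sz)$, which quantifies over extensions of $s$.
\end{itemize}
Lemma~\ref{Lem:ModalCollapse} says nothing about such formulas. Even for fully modalized formulas it does not produce a first-order condition, since these may contain plural quantifiers. So ``P-Sep plus the stability lemma'' does not show that the extension of a modal condition at an infinite world $s$ is a plurality of $M$. The same problem affects Plural Choice at each world, which you would also have to verify and do not.

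The missing idea is that your Kripke model $K$ is definable inside $M$, because its worlds are pluralities of $M$. Let $ss$ be a plural variable standing for the current world, and define a relativization that commutes with the connectives and satisfies:
\begin{itemize}
\item $[\forall x\,\varphi]_{ss} = (\forall x \prec ss)[\varphi]_{ss}$;
\item $[\forall xx\,\varphi]_{ss} = (\forall xx \preccurlyeq ss)[\varphi]_{ss}$;
\item $[\Box\varphi]_{ss} = (\forall tt \succcurlyeq ss)[\varphi]_{tt}$.
\end{itemize}
A routine induction then gives: $K, s \models \varphi$ iff $M \models [\varphi]_{s}$. With this in hand, the two world-level axioms follow from their counterparts in $M$:
\begin{itemize}
\item P-Comp at $s$ is an instance of P-Sep in $M$, applied to $ss$ with the non-modal plural condition $[\varphi]_{ss}(x)$.
\item Plural Choice at $s$ follows from Plural Choice in $M$, applied to $[\psi]_{ss}$ restricted to the given $xx$ and $yy$, with the resulting plurality then cut down by P-Sep.
\end{itemize}

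This relativization is precisely the paper's reverse translation, with $[\Box\varphi]_{xx} = (\forall yy \succcurlyeq xx)[\varphi]_{yy}$ and $[\Diamond\varphi]_{xx} = (\exists yy \succcurlyeq xx)[\varphi]_{yy}$. The paper uses it syntactically and constructively. That is what lets the argument carry over to the intuitionistic systems of Theorem~\ref{thm:fullstory-plural}. Your semantic version, once repaired this way, is its model-theoretic counterpart and works classically.

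Your fallback, simply erasing the modalities from an M-TPL proof, cannot work for plurals. An erased comprehension instance is unrestricted plural comprehension, and P-Sep can only restrict it once quantifiers are bounded by a plurality representing the world. That bounding is again the reverse translation. The paper itself stresses this: erasure suffices for first-order mirroring, but plural mirroring needs the reverse translation as an essentially new idea.
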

The upshot is that potentialists who rely on the usual classical modal logic of plurals is thereby entitled to basic plural logic. 
Analogous results hold for pairs of systems that strengthen, in matching ways, both the modal and the non-modal sides.

\section{Plural bimodal mirroring}\label{sec:bi-mirroring-PL}

After the unimodal detour of the previous section, let us return to the bimodal setting. Where Section~\ref{sec:bi-mirroring-FOL} studied \textit{first-order} bimodal mirroring, we are now ready to tackle the case of \textit{plural} logic. 

As before, each modality---$\Box_D$ for determination and $\Diamond_G$ for generative potential---can be left on or switched off. This yields four systems, which I will first describe 
and then compare by means of mirroring theorems. 
Our starting point are the first-order systems from Section~\ref{sec:bi-mirroring-FOL}, with the two important ones in boldface, whereas the other two are for \textit{aficionados} only:
\vspace{2mm}
\begin{center}
\begin{tabular}{c|cc}
     & $\Diamond_G$ off & $\Diamond_G$ on 
     \\ \hline
  $\Box_D$ off & \textbf{I-FOL} &S4.2-I-FOL\\
  $\Box_D$ on & S4-FOL & \textbf{BM-FOL}\\
\end{tabular}
\end{center}
\vspace{2mm}
Next, we add the Minimal Plural Logic of Section~\ref{sec:MPL} to each system. 
What remains are axioms concerning the modal rigidity of pluralities and plural comprehension. Let us consider these in turn.

\subsection{The modal rigidity of pluralities, intuitionistically}\label{sec:rigid-pl}

We begin with ``the bottom floor'' of the above table, where classical logic is available. 
Here we simply adopt the modal rigidity axioms 
from Section~\ref{sec:modallogicplurals} with respect to whichever (one or two) modalities are available. 

The two systems on the ``top floor'' of our table, where the logic is intuitionistic, are less straightforward but more interesting. In these systems, D-modality has been switched off in favor of intuitionistic logic. The rigidity of pluralities with respect to D-modality turns out to have consequences concerning the corresponding intuitionistic plural logic. 
First, the decidability of plural membership $\prec$
\begin{equation}\label{eq:Dec-prec}
    x \prec yy \vee \neg x \prec yy \tag{\textsc{Dec}-$\prec$}
\end{equation}
corresponds to the D-stability of  $\prec$.
For we easily show: 
\begin{Lem}\label{Lem:godel-prec}
    The original (or extended) G\"odel translation of (\ref{eq:Dec-prec}) is provable in S4 (or BM) from the D-stability of $\prec$. 
\end{Lem}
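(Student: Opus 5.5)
The plan is to compute the translation and then derive it directly from the two halves of D-stability for $\prec$. Under the Gödel translation (original or extended), atomic formulas are prefixed with the necessity operator, disjunction is translated componentwise, and negation becomes $\Box\neg$. In S4 the translation of (\ref{eq:Dec-prec}) is
\[
\Box\, x \prec yy \;\vee\; \Box \neg \Box\, x \prec yy ,
\]
and in BM it is the same formula with $\Box_D$ in place of $\Box$. The extended translation agrees with the original one on this modality-free formula, so a single argument covers both cases.

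The key steps run as follows. First, invoke the classical instance of excluded middle $x \prec yy \vee \neg x \prec yy$ in the ambient classical modal logic, and argue by cases.

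In the first case, $x \prec yy$. Positive D-stability, \stb$^+$-$\prec$, gives $\Box_D\, x \prec yy$, which is the left disjunct.

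In the second case, $\neg x \prec yy$. Negative D-stability, \stb$^-$-$\prec$, gives $\Box_D \neg x \prec yy$. By the T axiom we have $\Box_D\, x\prec yy \to x \prec yy$, whose contrapositive is $\neg x \prec yy \to \neg \Box_D\, x \prec yy$. Applying necessitation and the K axiom to this contrapositive yields $\Box_D \neg x\prec yy \to \Box_D \neg\Box_D\, x\prec yy$, which delivers the right disjunct.

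This computation uses only K, T and necessitation for $\Box_D$, all of which are available since D-modality obeys S4. Nothing specific to G-modality or to the interaction principles is needed, which is why the result holds equally in S4 and in BM.

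No step is a real obstacle. The one point needing care is that the negative disjunct of the translation is $\Box_D\neg\Box_D\, x\prec yy$ rather than simply $\Box_D\neg x\prec yy$. One must therefore check that negative stability suffices to reach it, and the T-based monotonicity step above shows that it does. It is also worth noting that both halves of stability are genuinely used: positive stability handles the left disjunct and negative stability the right. This matches the paper's remark that decidability of $\prec$ corresponds to D-stability as a whole, rather than to positive stability alone, which holds automatically for formulas in the image of $g$ by Lemma~\ref{Lem:G-stab}.
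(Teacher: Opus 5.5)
Your argument is correct, and it is the routine verification the paper leaves implicit behind ``we easily show'': compute the translation $\Box_D\, x\prec yy \vee \Box_D\neg\Box_D\, x\prec yy$, then obtain it by cases from the two halves of D-stability using T, K and necessitation. Only your closing aside is slightly off: the positive stability you use in the first case is that of the atom $x\prec yy$ itself, which is not in the image of $g$ and so is not automatic by Lemma~\ref{Lem:G-stab}.
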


Next, consider the following \textit{omniscience principle} for membership in a given plurality $aa$: 
\begin{equation}\label{eq:P-Omni}
    (\forall x \prec aa)(\varphi(x) \vee \neg \varphi(x)) \to (\exists x \prec aa)\varphi(x) \vee \neg (\exists x \prec aa) \varphi(x) \tag{\textsc{Omni}$\prec$}
\end{equation}
This principle states that quantification restricted to $aa$ behaves classically. More precisely, provided that $\varphi(x)$ behaves classically on any member of $aa$, the result of prefacing $\varphi(x)$ with the restricted quantifier $\exists x \prec aa$ yields a formula that in turn behaves classically. This omniscience principle turns out to correspond to the D-inextendability of $\prec$: 
\begin{Lem} \label{Lem:godel-omni}
    The original (or extended) G\"odel translation of (\ref{eq:P-Omni}) is provable in S4 (or BM) from the D-inextendability of~$\prec$. 
\end{Lem}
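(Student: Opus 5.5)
We compute the Gödel translation of (\ref{eq:P-Omni}) and prove it in S4 by classical reasoning inside the modal logic. Write $\Box$ for $\Box_D$ (in BM we simply read every $\Box$ below as $\Box_D$; G-modality plays no role). Restricted quantifiers abbreviate: $(\forall x \prec aa)\chi$ is $\forall x(x \prec aa \to \chi)$ and $(\exists x\prec aa)\chi$ is $\exists x(x\prec aa\wedge\chi)$. Using Lemma~\ref{Lem:G-stab} to simplify, the translation of the antecedent is equivalent to $\Box\forall x\,\Box(\Box x\prec aa \to \varphi^g \vee \Box\neg\varphi^g)$. Since positive D-stability of $\prec$ makes $\Box x\prec aa$ equivalent to $x\prec aa$, this is $\Box\forall x(x\prec aa \to \Box(\varphi^g\vee\Box\neg\varphi^g))$. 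The consequent translates to $\exists x(x\prec aa\wedge\varphi^g)\vee\Box\neg\exists x(x\prec aa\wedge \varphi^g)$, and the whole conditional to $\Box(\text{antecedent}\to\text{consequent})$. By necessitation it suffices to derive the consequent from the antecedent.

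The argument runs as follows. Assume the antecedent and suppose the left disjunct of the consequent fails, i.e.\ $\neg\exists x(x\prec aa\wedge\varphi^g)$. Then for every $x\prec aa$, the antecedent (with T) yields $\varphi^g\vee\Box\neg\varphi^g$. The first disjunct is excluded, so $\Box\neg\varphi^g$. Hence $\forall x(x\prec aa\to\Box\neg\varphi^g(x))$.

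Now apply the D-inextendability axiom (\ref{eq:Inext-prec}) with $\theta:=\neg\varphi^g(x)$. This gives $\Box\forall x(x\prec aa\to\neg\varphi^g)$, i.e.\ $\Box\neg\exists x(x\prec aa\wedge\varphi^g)$, which is the right disjunct. Classical excluded middle in the ambient logic then yields the disjunction.

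The main obstacle is bookkeeping rather than mathematics. We must check that the translation of $x\prec aa$ as $\Box x\prec aa$ does not block the appeal to (\ref{eq:Inext-prec}); positive stability of $\prec$ handles this, and if that axiom is not available the same step works with $\Box x\prec aa$ after routine rewriting. We must also check that Lemma~\ref{Lem:G-stab} (and its BM analogue in the proof of Lemma~\ref{lem:unfaithful}) legitimately removes nested boxes. For the extended translation into BM, $\exists$ becomes $\Diamond_G\exists$. We would then use the G-rigidity axioms in the same pattern: (\ref{eq:Inext-prec}) for G, together with G-stability of $\prec$, converts $\Diamond_G\exists x(x\prec aa\wedge\cdot)$ into the unmodalized $\exists x(x\prec aa\wedge\cdot)$, after which the S4 argument goes through verbatim.
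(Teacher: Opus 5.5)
Your argument is correct and is essentially the paper's proof: a classical case split on $(\exists x\prec aa)\varphi^\ast$, where in the negative case the antecedent yields $\Box_D\neg\varphi^\ast(x)$ for each $x\prec aa$, and D-inextendability then boxes the universal. You are more explicit than the paper's sketch, which writes $\neg\varphi^\ast$ where the translation gives $\Box_D\neg\varphi^\ast$ and tacitly relies on D-stability of $\prec$ by leaving $x\prec aa$ unboxed. Two of your side remarks need correcting, though. First, positive D-stability of $\prec$ cannot be dispensed with when the translation boxes $x\prec aa$. Take worlds $w_0\le w_1, w_2$ with constant domain $\{b\}$, $Fb$ true throughout, and $b\prec aa$ true at $w_0,w_1$ but false at $w_2$: this model satisfies D-inextendability, yet the translated instance with $\varphi=Fx$ fails at $w_0$. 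Second, in BM no appeal to G-inextendability is needed, and your route through it would also require negative G-stability of $\varphi^\ast$ itself, not just of $\prec$. Instead, $\Diamond_G$ goes on the first disjunct by T, and $\Box_D\neg\exists x(x\prec aa\wedge\varphi^\ast)$ yields $\Box_D\Box_G\neg\exists x(x\prec aa\wedge\varphi^\ast)$ by 4 and Subsumption.
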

\textit{Proof}. The antecedent of (\ref{eq:P-Omni}) translates as $(\forall x \prec xx) (\varphi^\ast(x) \vee \neg \varphi^\ast(x))$. Thus, classical modal logic yields: 
\begin{equation*}
    (\exists x \prec aa) \varphi^\ast(x) \vee \neg (\exists x \prec aa) \varphi^\ast(x)
\end{equation*}
By the D-inextendability of $\prec$, can put $\Box_D$ in front of the second disjunct. (And in BM, we can put $\Diamond_G$ in front of the first). This yields the translation of the consequent of (\ref{eq:P-Omni}). $\dashv$

\vspace{4mm}
Finally, we have the analogous definitions and results concerning $\preccurlyeq$ instead of $\prec$. 

Let me summarize the principles of modal rigidity of pluralities to be adopted on the top floor of our table. In the system where both modalities are switched off, we adopt (\ref{eq:Dec-prec}), (\ref{eq:P-Omni}) and its analogue (\textsc{Omni}$\preccurlyeq$) for $\preccurlyeq$. (For \textit{aficionados}: in the system where D-modality is switched off but G-modality is left on, we adopt the mentioned intuitionistic principles as well as the rigidity axioms concerning G-modality.)

\subsection{Plural comprehension, intuitionistically}\label{sec:int-P-comp}

We begin with the bimodal system, where both modalities are switched on. Here we simply adopt unrestricted plural comprehension (and thus Traditional Plural Logic). 

Next, we take a step to the left, by switching off G-modality. This requires the weaker comprehension principles of Basic Plural Logic---namely, Singleton, Pairwise Union, and Plural Separation---just as in the case of unimodal plural mirroring. 


Then, we take a step up to consider the important system where both modalities are switched off. Since G-modality is switched off, we again require some variant of Basic Plural Logic. But since plural membership is decidable, we additionally need to restrict Plural Separation to require a decidable condition: 
\begin{equation}\label{eq:Dec-P-Sep}
    (\forall x \prec xx) (\varphi(x) \vee \neg \varphi(x)) \to \exists yy \forall x (x \prec yy \leftrightarrow x \prec xx \wedge \varphi(x)) \tag{\textsc{Dec}-P-Sep}
\end{equation}
Let \textit{decidable BPL-comprehension} be plural comprehension as in Basic Plural Logic though with this restriction. 

(Finally, for \textit{aficionados}, there is the upper right-hand side, where D-modality is off but G-modality is on. Suppose we had full comprehension here. Then we would have a universal plurality: $\exists yy \forall x \, x \prec yy$. By making the D-modality explicit, this would yield a plurality that is \textit{D-necessarily} universal, which is too strong. Thus, we need some form of Basic Plural Logic here as well. Since plural membership is required to be decidable, Plural Separation must again be restricted to require a decidable condition.) 

\subsection{The essentials}\label{sec:PL-essentials}

Let me summarize the two important systems. First, I-BPL is obtained from I-FOL by adding 
\begin{enumerate}[(i)]
    \item Minimal Plural Logic; 
    \item the rigidity principles (\ref{eq:Dec-prec}), (\ref{eq:P-Omni}), and (\textsc{Omni}$\preccurlyeq$);
    \item decidable BPL-comprehension. 
\end{enumerate}
Second, BM-TPL is obtained from BM-FOL by adding 
\begin{enumerate}[(i)]
    \item Minimal Plural Logic; 
    \item the rigidity principles (\ref{eq:stb-prec}), (\ref{eq:Inext-prec}), and (\ref{eq:Inext-preccurlyeq}) for both modalities; 
    \item unrestricted plural comprehension. 
\end{enumerate}

By combining the G\"odel translation and the potentialist translation, there are two composite translations from the language of I-BPL to that of BM-TPL. As before, these are equivalent, \textit{modulo} BM, to the simplified translation $\ast$ of Lemma~\ref{lem:composite-trans}. 
We also obtain (almost) an analogue of Theorem~\ref{thm:BM-FOL} concerning the corresponding first-order system: 
\begin{Thm}[Bimodal plural mirroring]\label{thm:BM-PL}
\begin{enumerate}[(a)]
    \item The translation $\ast$ yields an embedding of I-BPL in BM-TPL. 
    \item When we add Reverse Subsumption to BM-TPL, we obtain a faithful embedding of classical BPL. 
\end{enumerate}
\end{Thm}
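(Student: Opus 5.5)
For part (a) I will mirror the first-order argument and factor $\ast$ as the G\"odel translation followed by the potentialist translation, passing through the intermediate system on the lower left of the table. That system is S4 plus classical Minimal Plural Logic, Basic Plural Logic comprehension and the D-rigidity axioms; call it S4-BPL. The first step is to check that $g$ embeds I-BPL in S4-BPL. Since Theorem~\ref{thm:mirroring-G} covers the purely logical part (no comprehension), I only need to verify that the $g$-translation of each plural axiom of I-BPL is a theorem of S4-BPL.
\begin{itemize}
    \item (\ref{eq:Dec-prec}) is handled by Lemma~\ref{Lem:godel-prec}.
    \item (\ref{eq:P-Omni}) and (\textsc{Omni}$\preccurlyeq$) are handled by Lemma~\ref{Lem:godel-omni}.
    \item Extensionality and Plural Choice translate into statements whose hypotheses are boxed. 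They should follow by applying the classical principles inside $\Box$, using the D-stability of $\prec$ and the D-inextendability principles to move boxes past the restricted quantifiers.
    \item The empty plurality, (\ref{eq:P-Adj}) and (\ref{eq:P-Union}) translate as $\exists yy\,\Box\forall x(\ldots)^g$. Here I take the classical witness and use stability of $\prec$ and $=$ to show that the defining biconditional holds necessarily.
    \item (\ref{eq:Dec-P-Sep}) is the delicate case. The antecedent translates to $(\forall x\prec xx)(\varphi^g\vee\Box\neg\varphi^g)$, using \textsc{InExt} to drop the outer box. Classical P-Sep then gives $yy$ comprising those $x\prec xx$ with $\varphi^g(x)$. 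Positive stability of $\varphi^g$ (Lemma~\ref{Lem:G-stab}), the decidability hypothesis and the stability of $\prec$ show that $yy$ continues to satisfy the boxed biconditional at all D-later worlds. This is exactly why the decidability restriction is needed: without it, negative instability of $\varphi^g$ would break the translated separation.
\end{itemize}

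The second step is to embed S4-BPL in BM-TPL via the extended potentialist translation of Lemma~\ref{lem:pot-BM-FOL}, with $\Box\mapsto\Box_D(\cdot)^{\Diamond_G}$. I will rely on the unimodal plural mirroring, Theorem~\ref{thm:plural-mirroring}, whose embedding direction shows that the $\Diamond_G$-translations of the BPL comprehension axioms and of Minimal Plural Logic are provable in M-TPL. Those derivations use only S4.2 for $\Box_G$, G-stability, G-rigidity and full comprehension, and all of these are available in BM-TPL. The D-rigidity axioms of S4-BPL translate to their BM counterparts, because plural membership is atomic and its $\Diamond_G$-translation is itself. Composing the two steps and invoking the analogue of Lemma~\ref{lem:composite-trans} then gives (a). I am deliberately claiming only an embedding, not faithfulness. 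Proving faithfulness would require recovering intuitionistic plural models from BM-TPL models in which unrestricted comprehension holds, and I don't see how to do that. This is presumably the ``almost'' in the statement.

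For (b) I follow the proof of Theorem~\ref{thm:BM-FOL}(b). With Reverse Subsumption, $\Box_G$ and $\Box_D$ are provably equivalent, so BM-TPL collapses to M-TPL in a single S4.2 modality. The rigidity and stability axioms for the two modalities then coincide. By Lemma~\ref{Lem:RS}, $\ast$ becomes equivalent to the potentialist translation $\Diamond$, with plural quantifiers included; the plural clauses need a check analogous to Lemma~\ref{Lem:ModalCollapse}, using the stability of $\prec$ and \textsc{InExt}. Theorem~\ref{thm:plural-mirroring} then yields a faithful embedding of classical BPL, and faithfulness carries over from M-TPL to BM-TPL plus Reverse Subsumption because the latter is a definitional copy of the former.

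I expect the main obstacle to be the Dec-P-Sep verification in step one, together with making sure the translated plural quantifiers interact correctly with $\Box_D$ through the inextendability principles.
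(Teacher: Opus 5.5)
Your proposal follows the paper's own route: (a) composes the G\"odel embedding of I-BPL into S4-BPL (rigidity via Lemmas~\ref{Lem:godel-prec} and~\ref{Lem:godel-omni}, decidable separation as the key comprehension case) with the potentialist interpretation of S4-BPL into BM-TPL, exactly as in Theorem~\ref{thm:fullstory-plural}(a),(b), while (b) comes from Lemma~\ref{Lem:RS} plus Theorem~\ref{thm:plural-mirroring}, as for Theorem~\ref{thm:BM-FOL}(b). Two details need tightening. First, the G\"odel-translated comprehension instances need D-inextendability of $\prec$, not merely stability of $\prec$ and $=$, to handle objects that first appear at D-later worlds, whereas dropping the outer box in the antecedent of (\ref{eq:Dec-P-Sep}) needs only T. Second, transferring the M-TPL derivations to BM-TPL requires G-stability of translated formulas that contain $\Box_D$, and this uses (\ref{eq:mixed-G}) and (\ref{eq:subsump}), not just the G-axioms you list.
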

It is an open question whether the embedding in (a) is faithful. 

As for the proof, (b) is handled as in Theorem~\ref{thm:BM-FOL}, whereas (a) is a consequence of the more general Theorem~\ref{thm:fullstory-plural} established below. In contrast to Theorem~\ref{thm:BM-FOL}, it is unknown whether the embedding in (a) is faithful. 

Finally, it is straightforward to extend the potentialist Kripke semantics from Section~\ref{sec:PotKripkeSem} to plural logic. As in the first-order case, it is easy to see that I-BPL is sound with respect to this semantics. However, I-BPL is not complete with respect to the semantics because of its standardness assumption, to the effect that the plural variables at a world range over \textit{all} pluralities of objects from the domain of that worlds.\footnote{\cite{Florio&Linnebo-HO-Henkin} shows there is a plurality-based form of Henkin semantics that lifts this assumption. By extending this technique to bimodal Kripke models, it should be possible to prove a completeness result concerning I-BPL. }

\subsection{The full story}

For \textit{aficionados}, the full story involves four systems, corresponding to each of the two modalities being switched on or off. Another table displays matching groups of axioms that are adopted in each of these systems---on top of their modal first-order bases and Minimal Plural Logic: 
\vspace{2mm}
\begin{center}
\begin{tabular}{c|c c c c c }
   & G-\textsc{Stb} & D-\textsc{Stb}& G-\textsc{Inext}& D-\textsc{Inext} & Comprehension \\ \hline
  BM-TPL  & \textsc{Stb}$^G$$\prec$ & \textsc{Stb}$^D$$\prec$ & \textsc{Inext}$^G$$\prec$/$\preccurlyeq$ & \textsc{Inext}$^D$$\prec$/$\preccurlyeq$  & TPL \\
  S4.2-I-BPL & \textsc{Stb}$^G$$\prec$ & \textsc{Dec}$\prec$ & \textsc{Inext}$^G$$\prec$/$\preccurlyeq$ & \textsc{Omni}$\prec$/$\preccurlyeq$  & I-BPL \\ 
  S4-BPL  & -- & \textsc{Stb}$^D$$\prec$ & -- & \textsc{Inext}$^D$$\prec$/$\preccurlyeq$ & BPL  \\
  I-BPL  & -- & \textsc{Dec}$\prec$ & -- & \textsc{Omni}$\prec$/$\preccurlyeq$  & I-BPL \\ 
\end{tabular}
\end{center}
\vspace{2mm}
Here is how to read the table. Each row describes one of the systems. The column to the left of the vertical line gives the name of each  system. The columns to the right of the line represent families of related axioms. Each entry here specifies which axiom (if any) from the relevant family is present in the relevant system. `\textsc{Inext}$^G$$\prec$/$\preccurlyeq$' is used to indicate G-inextendability of both $\prec$ and $\preccurlyeq$, and likewise for the other axioms in the relevant two columns. As concerns Comprehension, we use the name of each theory as shorthand for the comprehension axioms associated with that theory (which, strictly speaking, contains other axioms as well).

To state our mirroring results, we recall the table displaying the four systems to be related: 

\vspace{2mm}
\begin{center}
\begin{tabular}{c|cc}
     & $\Diamond_G$ off & $\Diamond_G$ on 
     \\ \hline
  $\Box_D$ off & I-BPL &S4.2-I-BPL\\
  $\Box_D$ on & S4-BPL & BM-TPL\\
\end{tabular}
\end{center}
\vspace{2mm}
The results are (almost) analogous to the ones from Section~\ref{sec:FO-fullstory} on the first-order case. 
\begin{Thm}\label{thm:fullstory-plural}
\begin{enumerate}[(a)]
    \item The (original or extended) potentialist translation provides a faithful interpretation of each system on the left in the corresponding system on the right. 
    \item The (original) G\"odel translation provides an embedding of the top left system in the bottom left one. 
    \item The (extended) G\"odel translation provides a unfaithful embedding of the top right system in the bottom right one. 
\end{enumerate}    
\end{Thm}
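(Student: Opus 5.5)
I would prove the three parts separately, each time building on the corresponding first-order result and then treating the plural axioms of the relevant rows of the axiom table one family at a time.

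For (a), the bottom row (S4-BPL into BM-TPL) is the classical case. I would extend the potentialist translation as in the proof of Lemma~\ref{lem:pot-BM-FOL}: $\Box$ becomes $\Box_D$, quantifiers (first-order and plural) become G-modalized, and plural quantifiers become $\Box_G\forall xx$ and $\Diamond_G\exists xx$. This is then verified along the lines of Theorem~\ref{thm:plural-mirroring}. The facts needed are these:
\begin{enumerate}[(i)]
\item Minimal Plural Logic and the BPL comprehension axioms translate into theorems of M-TPL, and hence of BM-TPL. Adjunction and union are obtained from unrestricted comprehension applied at a G-extension containing the relevant objects. Separation uses the G-inextendability of $\prec$ together with Lemma~\ref{Lem:ModalCollapse}, so that translated conditions are G-stable.
\item The D-rigidity axioms map to the D-rigidity axioms of BM-TPL. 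For $\mathrm{InExt}^D$ this needs a small argument that the restricted $\Box_G\forall x\prec yy$ is equivalent to the unmodalized restricted quantifier, using $\mathrm{InExt}^G$.
\end{enumerate}
Faithfulness I would get as before: given a model of S4-BPL refuting the target, I would build a BM-TPL model with G-accessibility equal to identity, or alternatively ``drop'' the G-modality syntactically by the inverse map. The top row (I-BPL into S4.2-I-BPL) goes the same way, except that $\mathrm{Dec}\prec$ and $\mathrm{Omni}\prec$ must be shown to survive G-modalization. For Omni this uses $\mathrm{InExt}^G$ and the intuitionistic stability lemma behind Theorem~\ref{thm:int-pot-FOL-mirror}.

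For (b), I would start from Theorem~\ref{thm:mirroring-G} for the comprehension-free plural language. It then remains to check that the Gödel translations of the extra I-BPL axioms are provable in S4-BPL:
\begin{enumerate}[(i)]
\item $\mathrm{Dec}\prec$ follows by Lemma~\ref{Lem:godel-prec}.
\item $\mathrm{Omni}\prec$ and its analogue for $\preccurlyeq$ follow by Lemma~\ref{Lem:godel-omni}.
\item Plural Choice and extensionality follow from their classical counterparts, using the stability of $\prec$ and $=$ and positive stability (Lemma~\ref{Lem:G-stab}) to push $\Box$ inward.
\end{enumerate}
The substantive case is decidable separation. From $(\forall x\prec xx)(\varphi^g\vee\Box\neg\varphi^g)$, I would apply classical separation in S4-BPL to get $yy$ collecting the $\varphi^g$ members. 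I would then show that $\Box\forall x(x\prec yy\leftrightarrow\ldots)^g$ holds, using $\mathrm{InExt}^D$ together with the fact that membership in $yy$ is D-stable. Adjunction, union and the empty plurality are immediate for the same reason. Only an embedding is claimed, so no faithfulness argument is needed.

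For (c), the embedding is obtained by combining the checks from (b) (now read with $\Box_D$) with the modal-axiom verification of Lemma~\ref{lem:unfaithful}, and then verifying that the G-rigidity axioms translate to theorems of BM-TPL. Unfaithfulness is inherited directly from Lemma~\ref{lem:unfaithful}: the formula $\varphi\to\Box\varphi$ is unprovable in S4.2-I-BPL but translates to a BM-TPL theorem. To show it is unprovable in S4.2-I-BPL, I would use a Kripke countermodel of intuitionistic S4.2 with a trivial plural part.

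I expect the main obstacle to be faithfulness in (a) for the top row. The reverse translation has to respect intuitionistic logic and the restricted comprehension simultaneously. I would first try a semantic argument: take an intuitionistic Kripke model of I-BPL and turn it into an S4.2-I-BPL model with identity G-accessibility. The difficulty is ensuring that full (modalized) comprehension holds there; this may require restricting plural variables to decidable pluralities, Henkin-style.
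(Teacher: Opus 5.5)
Your embedding directions, and your treatment of (b) and (c), follow the paper's appendix. The genuine gap is the faithfulness half of (a) in the \emph{bottom} row, which you dispose of ``as before''. Neither of your two methods works there, because the two systems have different comprehension. S4-BPL has only BPL-comprehension, whereas BM-TPL has unrestricted comprehension.

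Take $\psi := \exists yy\,\forall x\,(x\prec yy)$. It is an instance of unrestricted comprehension, hence a theorem of BM-TPL, and erasing the G-modalities leaves it unchanged. But it is not a theorem of S4-BPL. Otherwise, by the embedding direction, BM-TPL would prove $\psi^\Diamond = \Diamond_G\exists yy\,\Box_G\forall x\,(x\prec yy)$. That fails in the BM-TPL model with worlds $n\in\mathbb{N}$, domains $\{0,\ldots,n\}$, both accessibility relations equal to the usual order, and all subsets of the current domain as pluralities. So G-erasure does not preserve theoremhood. Semantically, in any BM-TPL model with identity G-accessibility, $\psi^\Diamond$ is equivalent to $\psi$ and hence true everywhere. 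So no such model refutes $\psi^\Diamond$, although $\psi$ is not an S4-BPL theorem.

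This is exactly where the paper says an essentially new idea is needed: the reverse translation $[\cdot]_{xx}$ of \cite{Linnebo:PotDemodal}. It represents a G-stage by a plurality $xx$ and restricts the non-modal quantifiers to $xx$ and its subpluralities. It sets $[\Box_G\varphi]_{xx} = (\forall yy\succcurlyeq xx)[\varphi]_{yy}$ and $[\Diamond_G\varphi]_{xx} = (\exists yy\succcurlyeq xx)[\varphi]_{yy}$, and, for the new modality, $[\Box_D\varphi]_{xx} = \Box(\forall yy\succcurlyeq xx)[\varphi]_{yy}$. Relativized to $xx$, unrestricted comprehension becomes separation from $xx$, which BPL does provide. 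The required superpluralities come from adjunction and union. One then checks that the translations of \textsc{Subsump} and \textsc{Mixed}.2 are theorems of S4-BPL.

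Conversely, the top row is not where the comprehension mismatch lies. According to the paper's table, S4.2-I-BPL has only decidable BPL-comprehension. So erasing the G-modalities maps its theorems to I-BPL theorems, and that already yields faithfulness there. You therefore do not need your semantic route. It would in any case presuppose a Kripke--Henkin completeness theorem for I-BPL, which the paper only conjectures. The paper instead handles both rows uniformly by the reverse translation. It uses separate clauses for $\Box_G$ and $\Diamond_G$, since these are not interdefinable intuitionistically, and notes that the classical verification is constructive.

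A smaller point concerns the bottom-row embedding. Lemma~\ref{Lem:ModalCollapse} does not by itself give G-stability of translated conditions containing $\Box_D$. For that clause you need \textsc{Subsump} together with the 4 axiom for $\Box_D$ to get positive stability, and \textsc{Mixed}.2 to get negative stability.
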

The proof, which is relegated to an appendix, is mostly similar in character to that of the analogous first-order claims in Section~\ref{sec:FO-fullstory}. The single major difference concerns the faithfulness of the horizontal interpretations via the potentialist translation. Here we need an essentially new idea, namely, a reverse translation developed to prove Theorem~\ref{thm:plural-mirroring}.


\subsection{Turning to applications}

This completes my general analysis of strict potentialism. I first articulated the bimodal logic BM, with a modality for each of the strict potentialist's two key ideas of object generation and truth determination. In search of more user-friendly explications, I then showed how each or both of these modalities can be switched off in favor of a less classical logic. 

To illustrate the value of this general analysis and the associated results, I will apply them, in the next three sections, 
to provide interesting analyses of some specific views in the foundations of mathematics.

\section{Predicative set theory}\label{sec:ED}

The first example is an approach to predicative set theory inspired by the great, philosophically oriented mathematician Hermann Weyl. The approach is based on the simple idea that all and only properties that are definite, in a certain sense, are eligible to define sets.

\subsection{Extensional definiteness}

Weyl characterizes a property as \textit{extensionally definite} when it has been properly ``demarcated''. The intended cash value of this demarcation is that quantification restricted to the property behaves classically.\footnote{To be precise, Weyl also requires that instantiation of the property behave classically. It would be straightforward to add this requirement to our subsequent analyses---at the cost of considerable extra complexity. Anyway, Theorem~\ref{thm:Int-ED}(f) shows that the additional requirement follows from an assumption that is very natural in our setting.} Extensional definiteness is thus related to, but different from, a property's defining a plurality. Since every plurality is completed, 
quantification restricted to the relevant objects behaves classically---as argued in Section~\ref{sec:rigid-pl}. Every plurality is therefore extensionally definite.\footnote{Strictly speaking, the claim concerns the property of being a member of the plurality. 
} 

However, a property can be extensionally definite without being completed (and thus defining a plurality). 
The property of being a natural number provides an important example. Weyl maintains that this property is extensionally definite but insists that ``inexhaustibility is essential to the infinite'' \cite[p.~23]{Weyl:1918}. This inexhaustibility means that the natural numbers cannot all be available, for us to ``go through one by one, like a police officer in his files'' \cite[p.~87]{Weyl:Grundlagenkrise}.

Extensional definiteness admits of a pleasing analysis in our bimodal framework. The basic idea is that a property is extensionally definite (or \textit{ED} for short) at a stage $s$ just in case $s$ contains the resources to generate all instances of the property---and determinately so. Consider the generation of natural numbers. There is a single way to generate these, namely, by means of the \textit{successor-of} operation; it is determined that no other way will ever become available. Thus, whatever is true of all G-possible natural numbers is also true of all D-possible numbers. We can capture this as follows:  
\begin{equation} \label{eq:ED-N}
    \Box_G (\forall x: \mathbb{N}x) \Box_D \theta \to \Box_D (\forall x: \mathbb{N}x)  \theta \tag{ED-$\mathbb{N}$}
\end{equation}
In other words: as concerns natural numbers, G-modality and D-modality coincide. 

More generally, we say that a property  $\lambda x.\varphi(x)$ is ED iff, for any $\theta$, we have: 
\begin{equation} \label{eq:D-inext-rel-G}
    \Box_G (\forall x: \varphi(x)) \Box_D \theta \to \Box_D (\forall x: \varphi(x))  \theta \tag{ED}
\end{equation}
That is, $\varphi(x)$ is ED just in case the $\varphi$s we can generate are determinately all the $\varphi$s---thus ensuring that, as concerns $\varphi$s, the two modalities coincide.
Clearly, this analysis of ED-ness is only available in the context of strict potentialism, where we have two modalities to play with, not just one, as in non-strict (or ``liberal'') potentialism. 

Let us examine what ED-ness amounts to when the modalities are switched off. 
First, we switch off G-modality to obtain: 
\begin{equation} \label{eq:D-completion-G-off}
    (\forall x: \varphi(x)) \Box_D \theta \to \Box_D (\forall x: \varphi(x))  \theta  \notag 
\end{equation}
This states that the property $\lambda x. \varphi(x)$ is D-inextendable. 
Then, we switch off D-modality as well. By Lemma~\ref{Lem:godel-omni}, the D-inextendability of $\varphi(x)$ corresponds to omniscience for quantification restricted to $\varphi(x)$: 
\begin{equation}\label{eq:ED-phi}
    (\forall x: \varphi(x))(\psi(x) \vee \neg \psi(x)) \to (\exists x:\varphi(x)) \psi(x) \vee (\forall x: \varphi(x))\neg \psi(x) \tag{\textsc{Omni}}
\end{equation}
The upshot, then, is an entirely non-modal analysis of extensional definiteness, namely, that $\varphi(x)$ is ED iff quantification restricted to $\varphi(x)$ behaves classically, in the sense expressed by (\ref{eq:ED-phi}).

Late in his life, Solomon Feferman proposed a nice slogan: ``What's definite is the domain of classical logic, what's not is that of intuitionistic logic" \cite[p. 23]{Feferman:Is_CH_def}. We now have a Weyl-inspired motivation for this slogan: we understand Feferman's ``definite'' as Weyl's ``extensionally definite'' and analyze the latter in the way just explained. This entails that what is ``definite'' is indeed the domain of classical logic, while otherwise only intuitionistic logic prevails.\footnote{\cite[p. 16]{Feferman:2014-conceptualstrucutralism}'s own motivation for the slogan runs together extensional determinateness with the more demanding notion of completability. }

\subsection{Principles of definiteness}\label{sec:princ-def}

Our next aim is to establish some principles of definiteness. We begin by defining a weaker notion of \textit{intensional definiteness} (abbreviated \textit{ID}). We say that a property $F$ is ID iff $\forall x (Fx \vee \neg Fx)$.
Next, by permitting quantification over properties, we can provide an explicit definition of extensional definiteness. We say that $X$ is ED iff, for every ID $F$, we have: 
\begin{equation}\label{eq:ED-X}
    (\exists x:Xx) Fx \vee \neg (\exists x: Xx)Fx \notag 
\end{equation}
In other words, $X$ is extensionally definite iff  quantification restricted to $X$ preserves intensional definiteness.\footnote{The definition mentions only this preservation only for the existential quantifier. But it is easily derived for the universal as well.} Both definitions are easily extended to polyadic relations. 

Given these definitions, intuitionistic logic 
alone suffices to prove some surprisingly informative principles of defniteness:
\begin{Thm}\label{thm:Int-ED}
    Intuitionistic logic 
    proves the following principles of definiteness: 
\begin{enumerate}[(a)]
     \item 
    ID conditions are closed under quantifier-free definitions. 
     \item ID conditions are closed under ED-bounded quantification 
    \item \textit{Separation}: Assume $X$ is ED and $F$ is ID. Then the intersection of the two concepts, namely $\lambda \vec{x}. (X \vec{x} \wedge F\vec{x})$, is ED. 
    \item \textit{Generalized Union}: If $X$ is ED and for each $x$ such that $Xx$ the property $\lambda y.\psi(x,y)$ is ED, then the property $\lambda y \exists x (Xx \wedge \psi(x,y)$ is ED. 
        \item  \textit{Replacement}: If $X$ is ED and $\forall x \in X \exists ! y \, \psi(x,y)$, then  $\lambda y (\exists x \in X)\psi(x,y)$ is ED. 
\end{enumerate}
Assume that identity is ID. Then we additionally have: 
\begin{enumerate}[(a)]\setcounter{enumi}{5}
    \item Every ED concept is also ID. 
    \item \textit{Adjunction}: For any ED property $X$ and any $a$, the property $\lambda x (Xx \vee x = a)$ is also ED. 
\end{enumerate}
   
\end{Thm}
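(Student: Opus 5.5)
The plan is to prove each clause directly in intuitionistic logic from the definitions, working in a setting with second-order quantification over properties so that ``$F$ is ID'' and ``$X$ is ED'' are statements in the object language. Throughout, $X$ being ED means that for every ID $F$, $(\exists x\colon Xx)Fx \vee \neg(\exists x\colon Xx)Fx$. The dual fact for the universal quantifier is used repeatedly: if $F$ is ID, then so is $\lambda x.\neg Fx$. Applying ED-ness to $\neg F$ gives either some $x\in X$ with $\neg Fx$, which refutes $(\forall x\colon Xx)Fx$, or $\neg(\exists x\colon Xx)\neg Fx$. In the second case the decidability of $F$ gives $Fx$ for each $x\in X$, since $\neg Fx$ is excluded. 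So $(\forall x\colon Xx)Fx$ is decidable as well.

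For (a), atomic ID conditions are given, and decidability is closed under $\wedge,\vee,\to,\neg$ intuitionistically, by case analysis on the decided components. For (b), if $\varphi(x,\vec y)$ is ID and $X$ is ED, then for fixed $\vec y$ the property $\lambda x.\varphi(x,\vec y)$ is ID. ED-ness, together with the dual fact just noted, makes $(\exists x\colon Xx)\varphi$ and $(\forall x\colon Xx)\varphi$ decidable.

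For (c), given an ID $G$, the property $\lambda x.(Fx\wedge Gx)$ is ID by (a). Then $(\exists x\colon Xx\wedge Fx)Gx$ is $(\exists x\colon Xx)(Fx\wedge Gx)$, which is decidable.

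For (d), given an ID $G$, the property $\lambda x.(\exists y\colon \psi(x,y))Gy$ is ID on $X$, because each $\lambda y.\psi(x,y)$ is ED. The ED-ness of $X$ then decides $\exists x(Xx\wedge \exists y(\psi(x,y)\wedge Gy))$, which is exactly the existential required for the union.

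(e) is a special case of (d), since each $\lambda y.\psi(x,y)$ is an ED singleton. This is the step I expect to be the main obstacle. Without decidable identity, a singleton $\lambda y.\psi(x,y)$ with unique witness $b$ is still ED: $(\exists y\colon \psi(x,y))Gy$ is equivalent to $Gb$, and $Gb$ is decidable because $G$ is ID. This works on the reading where the definiteness notions quantify over properties, so that ID is a condition on all objects. An alternative, if the definition turns out to be read differently, is to prove (e) directly by deciding $(\exists x\colon Xx)\,G(\iota y.\psi(x,y))$. I would check carefully that the definition only ever requires $F$ to be ID outright, and not merely ID relative to the domain.

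For (f), assume identity is ID. Given ED $X$ and any $a$, apply ED-ness to the ID property $\lambda x.\,x=a$. This decides $(\exists x\colon Xx)\,x=a$, which is equivalent to $Xa$, so $X$ is ID.

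For (g), given an ID $G$, we need to decide $\exists x((Xx\vee x=a)\wedge Gx)$. This is equivalent to $(\exists x\colon Xx)Gx \vee Ga$, a disjunction of two decidable statements, and hence decidable. This step does not even need decidable identity, but it sits naturally with (f).
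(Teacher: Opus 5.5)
The paper does not prove this theorem itself; it defers to Brauer and Linnebo, so your proposal has to stand on its own. Your treatment of (a), (b), (c), (f), (g) and of the universal dual is correct. You are also right that (g) does not actually use ID$(=)$ on the stated definition.

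\textbf{The gap is in (d)}, and (e) inherits it because you derive (e) from (d). You observe that $H:=\lambda x.(\exists y\colon \psi(x,y))Gy$ is decidable for every $x$ with $Xx$ (``ID on $X$''), and then feed $H$ to the ED-ness of $X$. But on the definition you fixed at the start, which is the paper's explicit one, ED-ness of $X$ applies only to properties satisfying $\forall x(Hx\vee\neg Hx)$. You know nothing about $H$ off $X$, since there $\psi(x,\cdot)$ is unconstrained. Replacing $H$ by $\lambda x.(Xx\wedge Hx)$ does not help, because $X$ is known to be ID only via (f), i.e.\ only under ID$(=)$. Your fallback of deciding $(\exists x\colon Xx)\,G(\iota y.\psi(x,y))$ has the same defect, since the description is only defined on $X$. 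So the caution you voice under (e) targets the wrong step: singletons are ED on either reading, and the reading matters precisely at (d).

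The gap cannot be closed on the outright reading: there, (d) and (e) are not intuitionistically provable without ID$(=)$. Here is a countermodel.
\begin{itemize}
\item Take a Kripke model with root $w_0$ and incomparable leaves $w_1,w_2$, constant domain $\{a,c,p,q\}$, and equality trivial except that $a=c$ at $w_2$.
\item Let $X$ be $\{a\}$ at $w_0$ and $\{a,c\}$ at both leaves.
\item Let $\psi$ be $\{(a,p)\}$ at $w_0$, $\{(a,p),(c,q)\}$ at $w_1$, and $\{(a,p),(c,p)\}$ at $w_2$.
\item Let $G=\{q\}$ everywhere.
\end{itemize}
At $w_0$, $G$ is ID, and $\psi$ is functional on $X$ with singleton (hence ED) sections. $X$ is ED at $w_0$: if $F$ is ID and $\neg Fa$, then $Fc$ cannot hold at $w_0$, since it would persist to $w_2$, where $c=a$. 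So $\neg Fc$ holds, and no element of $X$ ever falls under $F$. Yet $\exists y(\exists x(Xx\wedge\psi(x,y))\wedge Gy)$ fails at $w_0$ and holds at $w_1$, so the image of $X$ is not ED.

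You can repair the proof in one of two ways.
\begin{itemize}
\item Read ED as the relativized omniscience principle (\textsc{Omni}) that the paper extracts from the bimodal analysis, quantifying over $F$ with $(\forall x\colon Xx)(Fx\vee\neg Fx)$. Then your argument for (d) is valid as it stands. The other clauses survive with routine adjustments; e.g.\ in (c) you case-split on $Fx$ to see that $Fx\wedge Gx$ is decidable on $X$.
\item Keep the outright definition, move (d) and (e) under the ID$(=)$ hypothesis, and use (f) to make $\lambda x.(Xx\wedge Hx)$ ID outright.
\end{itemize}
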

See \cite{Brauer&Linnebo:InvariancePredSets} for a proof.

\vspace{4mm}

I observed 
that every plurality is ED. So it is unsurprising that every plural comprehension principle that is part of the intuitionistic plural logic I-BPL has an analogue concerning ED properties, namely, Empty, Adjunction,  Pairwise Union (a simple consequence of Generalized Union), and Decidable Separation. However, the principles of Theorem~\ref{thm:Int-ED} are better than these plural comprehension principles, in two ways. First, the comprehension principles established for ED properties are stronger than those adopted in I-BPL for pluralities. It is easily shown that (plural analogues of) Generalized Union and Replacement do not follow from I-BPL. 
Second, the comprehension principles for ED properties follow from intuitionistic logic and our definitions---and, in some cases, also ID(=)---whereas all the plural comprehension principle of I-BPL have to be assumed as axioms. I find it amazing that logic and definitions should give rise to such rich principles. 


\subsection{Predicative set theory}\label{sec:pred-sets}

We can now formulate a Weyl-inspired approach to predicative set theory.\footnote{This subsection summarizes a more thorough discussion found in \cite{Brauer&Linnebo:InvariancePredSets}.} Stated in prose, the theory is remarkably simple: 
\begin{quote}
    The property of being a natural number 
    is ED. Every ED property has a set as its extension. This is the only way to form sets. 
\end{quote}

More formally, we first lay down the ED-ness of $\mathbb{N}$ as an omniscience principle. Next, we adopt the principle that every ED property defines a set. 
\begin{equation}\label{eq:ED-Set}
    \forall F (\ed(F) \to \exists y \forall x (x \in y \leftrightarrow Fx) \tag{ED$\to$S}
\end{equation}
The idea is that any property whose instances have been properly ``demarcated'' is suitable for defining a set. Then, we lay down Extensionality: 
\begin{equation*}\label{eq:Ext}
    \set(xx,x) \wedge \set(yy,y) \to (x=y \leftrightarrow \forall z(z \prec xx \leftrightarrow z \prec yy)) \tag{\textsc{Ext}}
\end{equation*}
Finally, we use an induction principle to express that there is no way to form sets other than by means of ED concepts. Let $\Her(\varphi)$ abbreviate the statement that $\varphi$ is inherited from the instances of any ED property to the resulting set, that is, that for any ED $F$ we have $\forall x (Fx \to \varphi(x)) \to \varphi(\{u: Fu\})$. Then our axiom schema of induction is:
\begin{quote}
    Suppose every non-set is $\varphi$ and $\Her(\varphi)$. Then $\forall x \varphi(x)$.
\end{quote}

Taken together, these simple axioms yield a natural and fairly strong constructive theory of predicative sets, \textit{cPS}. In \cite{Brauer&Linnebo:InvariancePredSets} this theory is shown to be closely related to Feferman's semi-constructive set theory SCS. In particular, the two theories have the same proof-theoretic strength, namely that of Kripke Platek set theory---and thus well beyond the traditional ``limit of predicativity''. 

This attractive approach to predicative set theory would not be available without strict potentialism and its use of intuitionistic logic (when both modalities are switched off) or of two distinct modalities 
(when both modalities are switched on). In other words, the approach requires that the universe not be ED. 

In fact, we can say something far stronger: the approach requires that the property of being a \textit{subset} of $\mathbb{N}$ not  be ED. For suppose this property were ED. Then every condition in the language of second-order arithmetic---even impredicative ones---would be ED and thus define sets.\footnote{We see this as follows. The atomic predicates of arithmetic are ID. If both the properties $\mathbb{N}$ and $\lambda x. x \subseteq \mathbb{N}$ are ED, every condition in the language of second-order arithmetic would be ID, and thus, by Separation, also ED.} Since this would obviously be unacceptable to predicativists, we have to deny omniscience for subsets of~$\mathbb{N}$: 
\begin{equation}\label{eq:subP-OmniN}
    (\forall X \subseteq \mathbb{N})(\varphi(X) \vee \neg \varphi(X)) \to (\exists X \subseteq \mathbb{N})\varphi(X) \vee \neg (\exists X \subseteq \mathbb{N}) \varphi(X) \tag{\textsc{Omni}-$\subseteq\!\mathbb{N}$} 
\end{equation}
Switching both modalities back on, the failure of the omniscience principle (\ref{eq:subP-OmniN}) corresponds to the failure of: 
\begin{equation} \label{eq:inext-sub-ED}
    \Box_G (\forall X : X \subseteq \mathbb{N}) \Box_D \theta \to \Box_D (\forall X : X \subseteq \mathbb{N})  \theta
\end{equation}
In words: given any ways to generate subsets of $\mathbb{N}$, we cannot conclude that these are \textit{determinately} all the ways to generate such subsets. 
On the contrary, any ways to generate subsets of $\mathbb{N}$ can be used to specify yet other way to generate such subsets. 

I observed in Section~\ref{sec:amount-indet} that the distance between G- and D-modality provides a measure of some strict potentialist theory's non-classicality. And indeed, our predicative set theory holds that, as concerns natural numbers, the two modalities coincide, but not so as concerns sets of numbers.  (These commitments correspond to the acceptance of  (\ref{eq:ED-N})) and rejection of (\ref{eq:inext-sub-ED}), respectively.)

\section{Cantor's domain principle} \label{sec:DP}

A second application of my general analysis of strict potentialism concerns Cantor's famous \textit{domain principle}, which states that every potential infinity presupposes a corresponding actual infinity. 
\begin{quote}
    In order for there to be a variable quantity in some mathematical study, the ``domain'' of its variability must strictly speaking be known beforehand through a definition. However, this domain cannot itself be something variable, since otherwise each fixed support for the study would collapse. Thus, this ``domain'' is a definite, actually infinite set of values. Thus, each potential infinite, if it is rigorously applicable mathematically, presupposes an actual infinite. \cite[p.~9]{Cantor:1886}, transl. in \cite[p.~25]{Hallett:1984}
\end{quote}
The idea is that, for an infinite domain of ``variability'' (or quantification) to be properly defined, it must correspond to ``a definite, actually infinite, set''. 

\subsection{When the domain principle \textit{might} apply}

Is the domain principle justified? 
There are two cases to consider. 

First, suppose the domain in question is not even ED. Such a domain lacks the precise specification that Cantor seeks. It is extensionally \textit{in}definite precisely because it is  \textit{not} ``known beforehand through a definition''. For that reason, the ordinary, instantial conception of quantification is not available for such domains.\footnote{For the distinction between instantial and non-instantial conceptions of quantification, see \cite{Linnebo:GenExpl}. } Rather, quantification over a non-ED domain must---and can---be understood in terms of non-instantial generality, which requires intuitionistic logic. This provides a form of ``fixed support for the study'' that  was unavailable at Cantor's time. This new fixed support enables us to reject the domain principle as applied to a non-ED domain. 

Second, suppose the domain in question is ED. Weyl would insist, as against Cantor, that an ED domain, such as that of the natural numbers, can be well defined---and even be ``definite'' in a fairly robust sense---without a corresponding actually infinite set. I believe the analysis in the previous section proves Weyl right: we can make perfectly good sense of an domain that is ED but not completable. Thus, even applied to an ED domain, the domain principle is not \textit{conceptually required}. 

It might still be interesting, however, to explore the domain principle as a \textit{hypothesis}. We can regard it as an idealization of the sort that has so fruitfully been applied in Cantorian set theory.\footnote{There are ``revenge arguments'' that resemble the domain principle; cf. \cite[Sect.~7.5]{Studd:Everything-book} and \cite{Studd:Linnebo-BCP}. 
Since the Cantorian idealizations are appropriate in the context of such arguments, the second, Weylian response is not very robust. Rather, the best way to resist ``the revenge'' is the first response, i.e., to embrace strict potentialism and deny that the domain of all sets is ED. See also \cite[Sect.~9]{Linnebo:2023-Theoria-replies}.} In our framework, with both modalities switched off, this idealization takes the form: 
\begin{equation}\label{eq:DP}
    \forall F (\ed(F) \to \exists xx \forall y (y \prec xx \leftrightarrow Fy) \tag{ED$\to$P}
\end{equation}
In other words, we are supposing that the \textit{prima facie} less demanding notion of extensional definiteness suffices for the \textit{prima facie} more demanding notion of completability. 

\subsection{The cash value of completability}\label{sec:rigidityprinciples}

What, exactly, is gained in this passage from an ED property to an associated plurality? I have stressed that whereas every ED property is demarcated, a plurality is not only demarcated but also completed. What is the logical cash value of this conceptual claim? 

The answer concerns rigidity principles, in the sense of Section~\ref{sec:modallogicplurals}. Since a plurality $xx$ is completed, each member of $xx$ is available wherever $xx$ are available. Likewise, any objects $yy$ among $xx$ are available wherever $xx$ are available. This is why 
we adopted the inextendability of the notion of being a subplurality: 
\begin{equation}
    \forall xx (xx \preccurlyeq yy \to \Box_D \theta) \to \Box_D \forall xx (xx \preccurlyeq yy \to \theta)\tag{\textsc{InExt}-$\preccurlyeq$}
\end{equation}
Along the lines of Lemma~\ref{Lem:godel-omni}, this inextendability has an intuitionistic correlate, namely the following omniscience property: 
\begin{equation}\label{eq:subP-Omni}
    (\forall xx \preccurlyeq aa)(\varphi(xx) \vee \neg \varphi(xx)) \to (\exists xx \preccurlyeq aa)\varphi(xx) \vee \neg (\exists xx \preccurlyeq aa) \varphi(xx) \tag{\textsc{Omni}$\preccurlyeq$}
\end{equation}

There is a sharp contrast here with predicativism. Recall that Weyl and other predicativists regard the natural numbers $\mathbb{N}$ as ED but deny omniscience for its subsets: 
\begin{equation}
    (\forall X \subseteq \mathbb{N})(\varphi(X) \vee \neg \varphi(X)) \to (\exists X \subseteq \mathbb{N})\varphi(X) \vee \neg (\exists X \subseteq \mathbb{N}) \varphi(X) \tag{\textsc{Omni}-$\subseteq\!\mathbb{N}$} 
\end{equation}
Thus, the domain principle in the form (\ref{eq:DP}) has the substantial consequence of ensuring omniscience for subsets of the natural numbers---and, more generally, of the subsets of any ED domain.

\subsection{A new route to intuitionistic plural logic}\label{sec:newroute-IPL}

The domain principle provides a new route to intuitionistic plural logic. First, we prove the principles of definiteness stated in Theorem~\ref{thm:Int-ED}. Then, by invoking (\ref{eq:DP}), these principles carry over to analogous principles concerning the existence of pluralities. 

This route is quite attractive. For one thing, we obtain a stronger intuitionistic plural logic than the logic I-BLP that we obtained, in Section~\ref{sec:bi-mirroring-PL}, via plural mirroring. This stronger logic includes plural comprehension principles as strong as the ED comprehension principles of Theorem~\ref{thm:Int-ED}. As noted, this takes us beyond I-BPL by adding plural analogues of Generalized Union and Replacement.\footnote{For  \textit{cognoscenti}, this is an intuitionistic version of the Critical Plural Logic of \cite[ch.~12]{Florio&Linnebo-book}.} For another, this stronger plural logic has a single idea behind it, namely, that every ED property defines a plurality. And as already proclaimed, the principles of ED-ness follow from intuitionistic logic and definitions alone, albeit in two cases, we additionally require ID(=).

To summarize, we have two further examples of the value of strict potentialism and our bimodal analysis of it. We obtain an interesting analysis of Cantor's domain principle and the conditions under which this principle might be acceptable. Further, we obtain a new route to a strong---but still non-traditional---intuitionistic plural logic that would otherwise not be available. Both advances hinge on the concept of extensional definiteness, which is only available to strict potentialists and admits of a useful bimodal gloss. Without strict potentialism, the universe would be ED by default and our principles of extensional definiteness and plural existence would trivialize. 


\section{Strict potentialism about Cantorian sets} \label{sec:strict-pot-ST} 

As a final illustration of the analysis developed in this article, I will outline a strict potentialist version of Cantorian set theory. 

\subsection{The core theory}

Just like the predicativist set theory, this theory has a remarkably simple statement in prose: 

\begin{quote}
    There is a plurality of all natural numbers. Every plurality forms a set. This is the only way to form sets. 
\end{quote}

Let us begin with the idea that any available objects form a set. In the bimodal setting, this idea is formalized as: 
\begin{equation*}\label{eq:collapse}
    \Box_D \forall xx \, \Diamond_G \exists y \, \set(xx,y) \tag{Collapse$^\ast$}
\end{equation*}
Turning both modalities off, this yields: 
\begin{equation}\label{eq:P-S}
    \forall xx \, \exists y \, \set(xx,y) \tag{P$\to$S}
\end{equation}
That is, any completed lot of objects---or, as Cantor would put it, any ``consistent multiplicity''---defines a set. Next, we formalize the claim that all sets are generated in this way, in a well-founded manner, by adopting the following induction scheme: 
\begin{quote}
    Suppose that every urelement is $\varphi$ and that, for any $xx$ each of which is $\varphi$, $\{xx\}$ too is $\varphi$.
    Then everything is $\varphi$.
\end{quote}

\subsection{What pluralities are there?}

It becomes harder, but more interesting, when we ask what pluralities there are. We need a plural logic that provides information about what objects are jointly available to be collected into a set. The traditional option would be to start with the plural logic I-BPL that we obtain via the bimodal mirroring of Theorem~\ref{thm:BM-PL}, and then to strengthen this plural logic in various ways so as to obtain the set theoretic principles that we want.\footnote{This would be a strict potentialist analogue of the approach of \cite{Linnebo:2013-PHS}, cf. also \cite{Studd_ItConceptBiModal}. This approach is sketched in the appendix of \cite{Linnebo&Shapiro:2024-PredClassesStrictPot}.} 

Here I will adopt an alternative approach, which better exploits the opportunities that open up in the strict potentialist setting. We begin with minimal plural logic, which has no comprehension principles, and the plural rigidity principles.\footnote{Note that this adds two principles that are not available for ED properties, namely, Plural Choice and the strong rigidity principle (\ref{eq:subP-Omni}), which we saw in Section~\ref{sec:rigidityprinciples} to be distinctive of pluralities.} To obtain the needed plural comprehension principles, we use the new route to intuitionistic plural logic described in Section~\ref{sec:newroute-IPL}. To recall, this route has two steps: 
\begin{itemize}
    \item Using just intuitionistic logic and definitions, and in some cases ID(=), we prove the principles of intensional and extensional definiteness of Theorem~\ref{thm:Int-ED}. 
    \item We adopt Cantor's domain principle, in the form (\ref{eq:DP}), to allow every ED property to define a plurality. That is, we assume that every demarcated property can be completed.\footnote{Since every plurality defines an ED property and \textit{vice versa}, it would be possible to streamline the account by using just a single style of higher-order variable: plural quantification can be eliminated in favor of quantification over ED properties. For reasons of simplicity, though, I here forego this notational economy.} 
\end{itemize}
Finally, we lay down that there is a plurality of all natural numbers. 



\subsection{Deriving axioms of ordinary set theory}

I wish to observe how two distinctive axioms of transfinite set theory, which ordinarily require further assumptions, emerge from our theory naturally and for free. 

One example is the set-theoretic axiom of Replacement. This follows immediately from Plural Replacement (or, for that matter, Generalized Union). 
This contrasts favorably with the traditional approach via mirroring, where an additional assumption is needed to prove Replacement.\footnote{Specifically, we assume $(\forall x \prec xx) \exists!  y \, \psi(x,y) \to \exists yy (\forall x \prec xx) (\exists  y \prec yy) \, \psi(x,y)$ or its modalized analogue.} 

Another example is the Powerset axiom. Observe first that 
we can derive omniscience for subsets of any given set:\footnote{This follows from omniscience for subpluralities of any given plurality, (\ref{eq:subP-Omni}), along with (\ref{eq:P-S}).  }
\begin{equation}\label{eq:subS-Omni}
    (\forall x \subseteq a)(\varphi(x) \vee \neg \varphi(x)) \to (\exists x \subseteq a)\varphi(x) \vee \neg (\exists x \subseteq a) \varphi(x) \tag{\textsc{Omni}-$\subseteq$}
\end{equation}
Next, applying the domain principle, (\ref{eq:DP}), this yields a plurality of all subsets of $a$. Finally, by (\ref{eq:P-S}), we obtain the powerset of $a$. Again, this contrasts favorably with the traditional approach via mirroring, where an additional assumption is needed to prove Powerset.\footnote{We need to assume that, for any set $x$, there is a plurality of all of its subsets:
$\forall x \, \exists yy \,  \forall z (z \prec yy \leftrightarrow z \subseteq x)$.}

Summing up, we obtain a strict potentialist set theory much like ZFC except:\footnote{The theory is closely related to Feferman's Semi-Constructive Set theory SCS, supplemented with Powerset. See the appendix of \cite{Linnebo&Shapiro:2024-PredClassesStrictPot} for a comparison.} 
\begin{enumerate}[(i)]
        \item the logic is semi-intuitionistic (in the sense that we have intuitionistic logic, strengthened with decidability of identity and $\in$ as well as Bounded Omniscience; 
        \item instead of full Separation we have Separation only for decidable conditions
        \item instead of Foundation we have the classically equivalent principle of $\in$-induction. 
    \end{enumerate} 

This provides yet another example of a failure of Reverse Subsumption. If this principle were assumed, the logic would go classical. This would make every property ED, which in turn would render the theory inconsistent. Putting on our bimodal spectacles, the failure of Reverse Subsuption means that, whenever we have accepted some ways to generate sets, we can use these to specify yet stronger ways to generate sets.\footnote{One version of this phenomenon is the ``set revenge argument'' from \cite{Linnebo&Shapiro:2024-PredClassesStrictPot}.}

\section{Concluding summary}

Potentialism holds that objects are generated in an incompletable process. Strict potentialism adds that truths are successively made true. In the opening three sections, I used two modalities to explicate strict potentialism: one for the generation of objects (G-modality) and another for statements being determined as true (D-modality). This gave rise to a classical bimodal logic BM. 

I proceeded to investigate 
how one or both modalities can be ``switched'' on or off. When switched off, a modality is absorbed into the interpretation of the connectives and quantifiers. I identified the non-classical logics to which this ``switching off'' or ``demodalization'' gives rise: for G-modality, a restricted plural logic, and for D-modality, a semi-intuitionistic logic. To describe all the possibilities, I identified systems of plural logic that differ along several axes, namely, by
\begin{itemize}
    \item being based on classical or intuitionistic logic;
    \item being modal (including the modal rigidity axioms) or non-modal; 
    \item the amount of plural comprehension that is accepted, which could be Basic, Critical, or Traditional.
\end{itemize}
Using these systems, I established mirroring theorems that describe the results of all the possible ways to switch the two modalities on or off. 

In the final third of the article, I illustrated the value of the analysis by applying it to some important views in the foundations of mathematics. 
In addition to illuminating these view, this discussion revealed several reasons to accept strict, as opposed to liberal, potentialism: 
\begin{enumerate}[(i)]
    \item Weyl's notion of extensional definiteness becomes available;
    \item we can prove, using intuitionistic logic (nearly) alone, various useful principles of definiteness; 
    \item these principles enable a simple and natural, but fairly strong predicative set theory; 
    \item we can formulate a plausible explication of Cantor's famous domain principle; 
    \item this explication provides a unified route to a strong intuitionistic plural logic; 
    \item this plural logic enables a remarkably simple formulation of strict potentialism about sets, where Replacement and Powerset can be proved without the need for any additional assumptions. 
\end{enumerate}
None of these results would have been  available had we accepted only liberal, but not strict, potentialism. 

I submit that it is useful to be able to look at these results through both modal and non-modal glasses. The (bi)modal perspective is typically more conceptually illuminating, whereas the non-modal approaches are considerably simpler and more user-friendly. 

\appendix
\section{Proof sketch for Theorem~\ref{thm:fullstory-plural}}

We start with (a) (i.e., the potentialist mirroring), upper row, preservation of theoremhood left to right. We already have the mirroring of the underlying first-order systems. Next, we add Minimal Plural Logic on both sides. Then, the plural rigidity axioms added on the left are also added on the right. Finally, following the strategy of \cite[Appendix~A.1]{Linnebo:PotDemodal} for the proof of classical plural mirroring (Theorem~\ref{thm:plural-mirroring} above), it is routine to show that the axioms of decidable BPL are mapped to theorems on the right. 

For the reverse direction, we modify the reverse translation defined in (\textit{op. cit}., Appendix~A.2) 
to treat $\Box_G$ and $\Diamond_G$ separately:
\begin{gather*}
    [\Box_G \varphi]_{xx} = (\forall yy \succcurlyeq xx) [\varphi]_{yy}  \\
    [\Diamond_G \varphi]_{xx} = (\exists yy \succcurlyeq xx) [\varphi]_{yy} 
\end{gather*}
The proof from the classical analogue of the desired reverse direction is fully constructive and thus carries over to our present setting. 

We turn to the lower row, preservation of theoremhood left to right. We begin with the mirroring of the underlying first-order systems. Minimal Plural Logic is then added on both sides. The plural rigidity axioms added on the left are also present on the right. Finally, the plural comprehension principles added on the left are mapped to theorems on the right, as can again be seen by reasoning as in (\textit{op. cit}., Appendix~A.1). 

For the reverse direction, we extend the reverse translation of (\textit{op. cit}., Appendix~A.2) by adding the following clause: 
\begin{equation*}
    [\Box_D \varphi]_{xx} = \Box (\forall yy \succcurlyeq xx) [\varphi]_{yy}
\end{equation*}
Clearly, the reverse translation of (\ref{eq:subsump})) is a theorem. The reverse translation of \ref{eq:mixed-G} is: 
\begin{equation*}
    (\exists xx \succcurlyeq aa)\Box(\forall yy \succcurlyeq xx) [\varphi]_{yy} \to \Box (\forall xx \succcurlyeq aa)(\exists yy \succcurlyeq xx)[\varphi]_{yy}
\end{equation*}
which is a theorem.


We turn now to (b), which concerns the G\"odel mirroring. Consider first the left-hand column, preservation of theoremhood from top to bottom. Building on Theorem~\ref{thm:mirroring-G}, it suffices to consider the axioms involving plurals. The plural rigidity axioms are mapped to theorems by Lemmas~\ref{Lem:godel-prec} and \ref{Lem:godel-omni}. It remains to consider the axioms of decidable BPL. An illustrative case is (\ref{eq:Dec-P-Sep}), which translates as: 
\begin{equation}
    (\forall x \prec xx))(\varphi^g(x) \vee \Box \neg \varphi^g(x)) \to \exists yy \Box \forall x (x \prec yy \leftrightarrow x \prec xx \wedge \varphi^g(x)) 
\end{equation}
This is a theorem, by reasoning as in (\textit{op. cit}., Appendix~A.1). The other comprehension axioms are similar. 


Let us turn, finally, to the right-hand column, i.e., (c). Using the techniques developed above, it is routine to prove the preservation of theoremhood from top to bottom. We have already observed that the reverse direction fails, that is, that the G\"odel embedding on the right is not faithful. $\dashv$

\end{spacing}

\begin{footnotesize}
\bibliographystyle{apalike}
\bibliography{Total}

@Preamble{ " \newcommand{\noop}[1]{} " }

@article{Brauer-Linnebo-Shapiro:DivPot,
	title = {Divergent Potentialism: {A} Modal Analysis With an Application to Choice Sequences},
	doi = {10.1093/philmat/nkab031},
	journal = {Philosophia Mathematica},
	year = {2022},
	author = {Ethan Brauer and {\O}ystein Linnebo and Stewart Shapiro}
}

@ARTICLE{Crosilla-Linnebo:2024-Weyl,
  author =       {Laura Crosilla and {\O}ystein Linnebo},
  title =        {Weyl and Two Kinds of Potential Domains},
  journal =      {No\^{u}s},
  year =         {2024},
  volume =       {58},
  number =       {2},
  pages =        {409-30},
  doi =          {doi.org/10.1111/nous.12457},
}

@BOOK{Ewald:1996,
  AUTHOR =       {William Ewald},
  TITLE =        {From Kant to Hilbert: A Source Book in the Foundations of Mathematics},
  PUBLISHER =    {Oxford University Press},
  YEAR =         {1996},
  volume =       {2},
  address =      {Oxford},
}

@UNPUBLISHED{Feferman:Is_CH_def,
  author =       {Solomon Feferman},
  title =        {Is the continuum hypothesis a definite mathematical problem?},
  note =         {Unpublished manuscript},
  year =         {2011},
  howpublished = {\url{http://logic.harvard.edu/EFI Feferman IsCHdenite.pdf}},
}

@INCOLLECTION{Feferman:2014-conceptualstrucutralism,
  AUTHOR =       {Solomon Feferman},
  TITLE =        {Logic, Mathematics and Conceptual Structuralism},
  BOOKTITLE =    {The Metaphysics of Logic},
  PUBLISHER =    {Cambridge University Press},
  YEAR =         {2014},
  editor =       {Penelope Rush},
  pages =        {72--92},
  address =      {Cambridge},
}

@article{Florio&Linnebo-HO-Henkin,
	author = {Salvatore Florio and {\O}ystein Linnebo},
	doi = {10.1111/nous.12091},
	journal = {No\^{u}s},
	number = {3},
	pages = {565--583},
	title = {On the Innocence and Determinacy of Plural Quantification},
	volume = {50},
	year = {2016}
}

@ARTICLE{RasiowaSikorski:1953,
  AUTHOR =       {H. Rasiowa and R. Sikorski},
  TITLE =        {An Algebraic Treatment of the Notion of Satisfiability},
  JOURNAL =      {Fundamenta Mathematicae},
  YEAR =         {1953},
  volume =       {40},
  number =       {},
  pages =        {62-95},
}

@ARTICLE{Linnebo:PotDemodal,
  AUTHOR =       {{\O}ystein Linnebo },
  TITLE =        {Potentialism Demodalized},
  YEAR =         {2026},
  journal =      {Review of Symbolic Logic},
  volume =       {?},
  number =       {?},
  pages =        {?},
  note = {doi:10.1017/S1755020326101130},
  doi =          {doi:10.1017/S1755020326101130},
}

@ARTICLE{Cantor:1886,
  AUTHOR =       {Georg Cantor},
  TITLE =        {{\"U}ber die Verschiedenen {A}nsichten in {B}ezug auf die atualunendlichen {Z}ahlen},
  JOURNAL =      {Bihang till Kongl. Svenska vetenskaps-akademiens handlingar},
  YEAR =         {1886},
  volume =       {11},
  issue =         {19},
  pages =        {1-10},
}

@MISC{Brauer&Linnebo:InvariancePredSets,
  AUTHOR =       {Ethan Brauer and {\O}ystein Linnebo},
  TITLE =        {Predicativity as Invariance: the Case of Sets},
  YEAR =         {2025},
  howpublished = {Typescript},
}

@MISC{Linnebo&Litland:GenGrounded,
  AUTHOR =       {{\O}ystein Linnebo and Jon Erling Litland},
  TITLE =        {Grounding generalities},
  YEAR =         {2025},
  howpublished = {Unpublished manuscript},
}

@BOOK{Florio&Linnebo-book,
  author =       {Salvatore Florio and {\O}ystein Linnebo},
  TITLE =        {The Many and the One: A Philosophical Study of Plural Logic},
  PUBLISHER =    {Oxford University Press},
  YEAR =         {2021},
  address =      {Oxford},
}

@BOOK{Hallett:1984,
  AUTHOR =       {Michael Hallett},
  TITLE =        {Cantorian Set Theory and Limitation of Size},
  PUBLISHER =    {Clarendon},
  YEAR =         {1984},
  address =      {Oxford},
}

@BOOK{Linnebo:2017-PhilMaths,
  author =       {{\O}ystein Linnebo},
  title =        {Philosophy of Mathematics},
  publisher =    {Princeton University Press},
  year =         {2017},
  address =      {Princeton, NJ},
}

@ARTICLE{Linnebo:GenExpl,
  author =       {{\O}ystein Linnebo},
  title =        {Generality Explained},
  journal =      {Journal of Philosophy},
  year =         {2022},
  volume =       {119},
  number =       {7},
  pages =        {349-379},
}

@ARTICLE{Linnebo&Shapiro:ActPotInf,
  author =       {{\O}ystein Linnebo and Stewart Shapiro},
  title =        {Actual and Potential Infinity},
  journal =      {No\^us},
  year =         {2019},
  volume =       {53},
  number =       {1},
  pages =        {160--191},
}

@article{Linnebo&Shapiro:2023-PredFormPot,
	year = {2023},
	author = {{\O}ystein Linnebo and Stewart Shapiro},
	pages = {1--32},
	journal = {Review of Symbolic Logic},
    volume = {16},
    number = {1},
	doi = {10.1017/s1755020321000423},
	title = {Predicativism as a Form of Potentialism}
}

@article{Linnebo&Shapiro:2024-PredClassesStrictPot,
	year = {2024},
	author = {{\O}ystein Linnebo and Stewart Shapiro},
	pages = {?},
	journal = {Philosophia Mathematica},
    volume = {?},
    number = {?},
	doi = {?},
	title = {Predicative Classes and Strict Potentialism}
}

@ARTICLE{Linnebo:2013-PHS,
  AUTHOR =       {{\O}ystein Linnebo},
  TITLE =        {The Potential Hierarchy of Sets},
  YEAR =         {2013},
  JOURNAL =      {Review of Symbolic Logic},
  volume =       {6},
  number =       {2},
  pages =        {205-228},
}

@article{Linnebo:2023-Theoria-replies,
	title = {Replies},
	doi = {10.1111/theo.12465},
	author = {{\O}ystein Linnebo},
	publisher = {Wiley},
	volume = {89},
	journal = {Theoria},
	number = {3},
	pages = {393--406},
	year = {2023}
}

@INCOLLECTION{Linnebo:2027-WhatIsPot,
  author =       {{\O}ystein Linnebo},
  title =        {What is Set-Theoretic Potentialism?},
  booktitle =    { Oxford Handbook of Philosophy of Set Theory},
  publisher =    {Oxford University Press},
  year =         {202x},
  editor =       {Luca Incurvati and Giorgio Venturi},
  pages =        {??},
  address =      {Oxford},
}

@PHDTHESIS{SimpsonAlex-int-modal-logic,
  author =       {Alex K. Simpson},
  title =        {The Proof Theory and Semantics of Intuitionistic Modal Logic},
  school =       {University of Edinburgh},
  year =         {1994},
  type =         {Ph{D} dissertation},
}

@article{Studd_ItConceptBiModal,
	title = {The Iterative Conception of Set: A (Bi-)Modal Axiomatisation},
	journal = {Journal of Philosophical Logic},
	year = {2013},
	pages = {697--725},
	volume = {42},
	number = {5},
	author = {James Studd}
}

@BOOK{Studd:Everything-book,
  author =       {James P. Studd},
  title =        {Everything, more or less: A Defence of Generality Relativism},
  publisher =    {Oxford University Press},
  year =         {2019},
  address =      {Oxford},
}

@article{Studd:Linnebo-BCP,
	author = {J. P. Studd},
	doi = {10.1111/theo.12356},
	journal = {Theoria},
	number = {3},
	pages = {366--392},
	publisher = {Wiley},
	title = {Linnebo's Abstractionism and the Bad Company Problem},
	volume = {89},
	year = {2023}
}

@BOOK{Weyl:1918,
  AUTHOR =       {Hermann Weyl},
  TITLE =        {Das Kontinuum},
  PUBLISHER =    {Verlag von Veit \& Comp},
  YEAR =         {1918},
  address =      {Leipzig},
  note =         {Translated as \emph{The Continuum} by S. Pollard and T. Bole, Dover, 1994.},
}

@ARTICLE{Weyl:Grundlagenkrise,
  AUTHOR =       {Hermann Weyl},
  TITLE =        {\"{U}ber die neue {G}rundlagenkrise der {M}athematik},
  JOURNAL =      {Mathematische Zeitschrift},
  YEAR =         {1921},
  volume =       {10},
  number =       {1--2},
  pages =        {39--79},
  note =         {Translated in  P. Mancosu (ed.),\emph{From Brouwer to Hilbert: The Debate on the Foundations of Mathematics in the 1920s},  (Oxford University Press, 1998)},
}
\end{footnotesize}

\end{document}